\newtheorem{theorem}{Theorem}[section]
\newtheorem{lemma}[theorem]{Lemma}
\newtheorem{conjecture}[theorem]{Conjecture}
\newtheorem{proposition}[theorem]{Proposition}
\theoremstyle{definition}
\newtheorem{definition}[theorem]{Definition}
\theoremstyle{remark}
\newtheorem{remark}[theorem]{Remark}
\numberwithin{equation}{section}
\begin{document}

\title{On a conjecture of Laugesen and Morpurgo}

\author{Mihai N. Pascu}
\address{Faculty of Mathematics and Computer Science, Transilvania University of Bra\c{s}ov, Bra\c{s}ov -- 500091, ROMANIA}
\email{mihai.pascu@unitbv.ro}
\thanks{Supported by CNCSIS grant PNII - ID 209.}

\author{Maria E. Gageonea}
\address{Faculty of Mathematics and Computer Science, Transilvania University of Bra\c{s}ov, Bra\c{s}ov -- 500091, ROMANIA}
\email{gageonea@yahoo.com}
\thanks{Supported by CNCSIS grant PNII - ID 209.}

\subjclass[2000]{Primary 60J65, 60J35.}


\keywords{reflecting Brownian motion, transition density, Hot Spots
conjecture}

\maketitle

\begin{abstract}
A well known conjecture of R. Laugesen and C. Morpurgo asserts that
the diagonal element of the Neumann heat kernel of the unit ball in
$\mathbb{R}^{n}$ ($n\geq 1$) is a radially increasing function. In
this paper, we use probabilistic arguments to settle this
conjecture, and, as an application, we derive a new proof of the Hot
Spots conjecture of J. Rauch in the case of the unit disk.
\end{abstract}

\section{Introduction}

We learned from Rodrigo Ba\~{n}uelos the following conjecture of
Richard Laugesen and Carlo Morpurgo which arose in connection with
their work on conformal extremals of zeta functions of eigenvalues
under Neumann boundary conditions in \cite{Laugesen-Morpurgo}:

\begin{conjecture}[Laugesen-Morpurgo Conjecture]
\label{Laugesen-Morpugo conjecture in R^2}Let $p_{U}(t,x,y)$ denote
the heat kernel for the Laplacian with Neumann boundary conditions
on the unit disk $U=\left\{ z\in \mathbb{C}:\left\vert z\right\vert
<1\right\} $ in $\mathbb{C} $. For any $t>0$ arbitrarily fixed, the
radial function $p_{U}(t,x,x)$, called the diagonal element of the
heat kernel, is a strictly increasing function of $|x|$, that is,
\begin{equation}
p_{U}(t,x,x)<p_{U}(t,y,y),  \label{Laugesen conjecture}
\end{equation}%
for any $t>0$ and $x,y\in U$ with $\left\vert x\right\vert
<\left\vert y\right\vert $.
\end{conjecture}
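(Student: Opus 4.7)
My plan is to translate the conjecture into a statement about reflecting Brownian motion on $U$ and then establish it via a coupling argument.

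\emph{Reduction via Chapman--Kolmogorov.} By the rotational invariance of $U$, the map $x\mapsto p_{U}(t,x,x)$ is radial, so I may assume $x=(a,0)$ and $y=(b,0)$ with $0\leq a<b<1$. Using the semigroup property together with the symmetry $p_{U}(s,z,w)=p_{U}(s,w,z)$, I rewrite
\[
p_{U}(t,x,x)=\int_{U}p_{U}(t/2,x,z)^{2}\,dz,
\]
so the conjecture becomes the strict monotonicity of $r\mapsto\|p_{U}(t/2,(r,0),\cdot)\|_{L^{2}(U)}^{2}$ on $[0,1)$.

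\emph{Coupling construction.} Interpret $p_{U}(t/2,x,\cdot)$ and $p_{U}(t/2,y,\cdot)$ as densities of two reflecting Brownian motions $(X_{s})$ and $(Y_{s})$ started at $x$ and $y$. I would build these on a common probability space using a mirror-type coupling across the perpendicular bisector $\ell$ of the segment $[x,y]$: away from $\partial U$ this is the geometric reflection $Y_{s}=\sigma_{\ell}(X_{s})$, while near $\partial U$ the coupling is modified so that both processes remain in $U$ and each is, marginally, a genuine reflecting Brownian motion from its starting point.

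\emph{Strict comparison.} Granted such a coupling, I would bound the difference $p_{U}(t,y,y)-p_{U}(t,x,x)$ below by the expectation of a boundary local-time functional which is strictly positive, the intuition being that $y$ is strictly closer to $\partial U$ than $x$, so the boundary local time of $(Y_{s})$ activates earlier and more often than that of $(X_{s})$. This extra boundary ``push'' concentrates the density of $Y_{t/2}$ in a neighbourhood of $y$ more than it concentrates the density of $X_{t/2}$ near $x$, which after squaring and integrating yields the desired $L^{2}$-gap.

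\emph{Principal obstacle.} The main difficulty is geometric: $\ell$ is not a line of symmetry of $U$ (it fails to pass through the origin whenever $a+b\neq 0$), so the naive reflection $\sigma_{\ell}$ does not send $U$ into itself. Defining a coupling that stays inside $U$ and produces two bona fide reflecting Brownian motions, while simultaneously tracking how the ``excess'' local time of $Y$ translates into a strict $L^{2}$-gap between the two heat-kernel slices, is the technical heart of the proof. I also expect that verifying strictness (as opposed to the weak inequality) will require a nontrivial irreducibility or support argument for the coupled process.
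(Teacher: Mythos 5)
Your plan assembles the right raw ingredients (mirror coupling of reflecting Brownian motions, attention to boundary local time), but the step you yourself label the ``technical heart'' is precisely where the proof has to live, and your outline supplies no mechanism for it. A mirror coupling of $X$ started at $x$ and $Y$ started at $y$ compares the probabilities that the two processes lie in the \emph{same} set on a given side of the (moving) mirror; it does not by itself compare the probability that $Y$ returns near its own starting point $y$ with the probability that $X$ returns near its own, distinct, starting point $x$, which is what your $L^{2}$ reduction demands. Your proposed bridge --- that the extra boundary local time of $Y$ ``concentrates the density of $Y_{t/2}$ near $y$ more than the density of $X_{t/2}$ near $x$'' --- is essentially a restatement of the conjecture rather than an argument, and the strictness you defer to an ``irreducibility or support argument'' is also left unproved. (The identity $p_{U}(t,x,x)=\int_{U}p_{U}(t/2,x,z)^{2}\,dz$ is correct, but it does not make the comparison any easier.)

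The paper's proof turns on two ideas absent from your plan. First, instead of coupling processes started at $x$ and $y$ at different radii, it couples reflecting Brownian motions started at $x+r$ and $x+re^{i\theta}$, i.e.\ at two points of a small circle centred at $x\in(0,1)$, and compares their returns to the \emph{common} ball $B(x,\varepsilon)$: an It\^o computation shows that the coefficients $u_{t},v_{t}$ of the mirror line are processes of bounded variation driven only by $dL_{t}^{X}$, whence the mirror moves monotonically away from $x$ toward the origin and never separates $x$ from $X_{t}$; consequently $Y_{t}\in B(x,\varepsilon)$ forces $X_{t}\in B(x,\varepsilon)$, which gives $p_{U}(t,x+re^{i\theta},x)\leq p_{U}(t,x+r,x)$ and similarly $p_{U}(t,x+r,x)\leq p_{U}(t,x+r,x+r)$. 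Second, the diagonal monotonicity is extracted analytically: averaging over $\theta$, dividing by $r$ and letting $r\searrow 0$ shows $\frac{d}{dx}p_{U}(t,x,x)\geq \frac{1}{2\pi}\int_{0}^{2\pi}\nabla p_{U}(t,x,x)\cdot e^{i\theta}\,d\theta=0$, and strictness follows from the real analyticity of $x\mapsto p_{U}(t,x,x)$, not from any irreducibility of the coupled process. Without the monotone-mirror lemma and this averaging/differentiation step, your outline does not reach the conclusion.
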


Surprisingly, despite the seemingly simple nature of this conjecture
and the fact that it seems to have been well known since 1994, we do
not know of any
progress on it, aside from some partial related results (see \cite%
{PascuNicolaie}, \cite{PascuPascu} and \cite{PascuPascu1}). A more
recent
result related to this conjecture is due to Ba\~{n}uelos et. all (\cite%
{Banuelos}), in which the authors show that the Laugesen-Morpurgo
conjecture (\ref{Laugesen conjecture}) holds for the transition
density of the $n$ -dimensional Bessel processes on $(0,1]$
reflected at $1$ in the case $n>2$, and that this is false for
$n=2$. Since the absolute value of a $n$ -dimensional Brownian
motion is a Bessel process of order $n$, this is equivalent to the
monotonicity with respect to $x\in \left( 0,1\right) $ of the integral mean%
\begin{equation*}
\int_{0}^{2\pi }p_U\left( t,x,xe^{i\theta }\right) d\theta .
\end{equation*}

\begin{remark}
The Laugesen--Morpurgo conjecture has the following physical
interpretation. Consider a thermally insulated planar disk in which
an atom of heat has been introduced at time $t=0$. Then one feels
\textquotedblleft warmest\textquotedblright\ for all $t>0$ at the
point where the atom of heat has been introduced if the point was
chosen on the boundary of the disk, and feels \textquotedblleft
coldest\textquotedblright\ at this point for all $t>0 $ if the
chosen point was the center of the disk. Moreover, the corresponding
temperature function (measured at the point where the atom of heat
was inserted) is, for all $t>0$ arbitrarily fixed, a radially
increasing function with respect to the point where the atom of heat
was introduced. See the Theorem \ref{Hot Spots for the unit disk}
and Remark \ref {Hot Spots remark} for a connection of the
Laugesen-Morpurgo conjecture with the Hot Spots conjecture of
Jeffrey Rauch.
\end{remark}

In this paper, we use probabilistic arguments (couplings of
reflecting Brownian motions) to settle the Laugesen-Morpurgo
conjecture, first in the $2 $-dimensional case (Section \ref{Main
results}), then in the general case (Section \ref{Extensions and
Applications}). The paper is organized as follows: in Section
\ref{Preliminaries} we present the mirror coupling introduced by
Burdzy el. all (\cite{BurdzyKendall}, and more recently
\cite{AtarBurdzy}, \cite{AtarBurdzy2}), we establish the notation
and we derive some properties of the coupling needed for the proof
in the particular case of the reflecting Brownian motions in the
unit disk.

In Section \ref{Main results},  in the particular case of the unit
disk, we describe the motion of the mirror of the coupling (i.e. the
line of the symmetry between the two processes), more precisely we
show that for a certain choice of the starting points, the mirror
always moves away from its starting point, towards the origin. This
geometric property of the coupling, together with some probabilistic
and analytic arguments allows us to settle the Laugesen-Morpurgo
conjecture in the $2$-dimensional case (Theorem \ref {main
theorem}).

In Section \ref{Extensions and Applications} we show that the
arguments used in the previous section can be extended to obtain a
proof of the general Laugesen-Morpurgo conjecture (we first present
the $1$-dimensional case, then we give the proof of the general case
for $n=3$, since it is easier to follow geometrically and
notation-wise, yet it contains all the ideas involved in the proof
of the general case), in Theorem \ref{general laugesen-Morpurgo
conjecture}.

As an application, we derive a different proof of the Hot Spots
conjecture of Jeffrey Rauch, which suggests a possibly different
approach for a resolution of this later conjecture, solved only
partially at the present moment.

\section{Preliminaries\label{Preliminaries}}

Our proof of Laugesen-Morpurgo conjecture relies on a certain
property of the mirror coupling of reflecting Brownian motions in
the unit disk and a representation of the Neumann heat kernel as an
occupation time density of reflecting Brownian motion. We begin with
a presentation of these results.

We denote by $U=\left\{ z\in \mathbb{C}:\left\vert z\right\vert
<1\right\} $ the unit disk in $\mathbb{C}$ (which we identify with
$\mathbb{R}^{2}$).

We define the reflecting Brownian motion in $U$ as a solution of the
stochastic differential equation:
\begin{equation}
X_{t}=X_{0}+B_{t}+\int\limits_{0}^{t}\nu (X_{s})dL_{s},\qquad t\geq
0\text{.}  \label{Skorokhod equation}
\end{equation}

Formally we have:

\begin{definition}
\label{Definition of RBM}$X_{t}$ is a reflecting Brownian motion in
$U$ starting at $x_{0}\in \overline{U}$ if it satisfies
(\ref{Skorokhod equation}), where:

(a) $B_{t}$ is a $2$-dimensional Brownian motion started at $0$ with
respect to a filtered probability space $(\Omega ,\mathcal F,
(\mathcal F_t)_{t\geq 0}, P)$,

(b) $L_{t}$ is a continuous nondecreasing process which increases
only when $X_{t}\in \partial U$, i.e. $\int_0^{\infty} 1_U(X_t)\,
dL_t = 0$, a.s.

(c) $X_{t}$ is $(\mathcal{F}_{t})$-adapted, and almost surely
$X_{0}=x_{0}$ and $X_{t}\in \overline{U}$ for all $t\geq 0$.
\end{definition}

\begin{remark}
For pathwise existence and uniqueness of reflecting Brownian motion
in the sense of the above definition see for example
\cite{Bass-Hsu2}.
\end{remark}

Krzysztof Burdzy et. all (\cite{BurdzyKendall}, and more recently
\cite{AtarBurdzy}, \cite{AtarBurdzy2}) introduced the mirror
coupling of reflecting Brownian motions $X_{t}$ and $Y_{t}$ in a
smooth planar domain $D\subset \mathbb{R}^{2}$, which we will
describe briefly below.

The ideea of the coupling is that the two processes behave like
ordinary Brownian motion (symmetric with respect to a line of
symmetry, called the mirror of the coupling) when both of them are
inside the domain $D$. When one of the processes hits the boundary,
the mirror $M_{t}$ gets a (minimal) push towards the inward unit
normal at the correponding point at the boundary, needed in order to
keep both processes in $\bar{D}$.

Considering the coupling time $\tau =\inf \left\{ t\geq 0:X_{t}\in
M_{t}\right\} $, the mirror coupling evolves as described above for
$t < \tau $, and we let $X_{t}=Y_{t}$ for $t\geq \tau $ (the two
processes move together after the coupling time). For definiteness,
for $t\geq \tau $ we define the mirror $M_{t}$ as the line passing
through $X_{t}=Y_{t}$ and making the same angle as $M_{\tau }$ with
the horizontal axis.

Formally, given two arbitrarily fixed points $x,y\in U$, we define
the mirror coupling of reflecting Brownian motions in the unit disk,
as a pair $\left( X_{t},Y_{t}\right) _{t\geq 0}$ of stochastic
processes on a common filtered probability space $(\Omega, \mathcal
F,(\mathcal F_t)_{t\geq0},P)$, given by
\begin{equation}
\left\{
\begin{array}{l}
X_{t}=x+W_{t}+\int_{0}^{t}\nu \left( X_{s}\right) dL_{s}^{X} \\
Y_{t}=y+Z_{t}+\int_{0}^{t}\nu \left( Y_{s}\right) dL_{s}^{Y}%
\end{array}%
\right.  \label{mirror coupling in U}
\end{equation}%
where $W_{t}$ is a $2$-dimensional Brownian motion starting at
$W_{0}=0$, $(\mathcal F_t)_{t\geq 0}$ is the filtration generated by
the Brownian motion $W_t$, $Z_{t}$ is the mirror image of the
Brownian motion $W_{t}$ with respect to the line of symmetry $M_{t}$
between $X_{t}$ and $Y_{t}$, that is
\begin{equation}
Z_{t}=W_{t}-2\int_{0}^{t}\frac{X_{s}-Y_{s}}{\left\Vert
X_{s}-Y_{s}\right\Vert ^{2}}\left( X_{s}-Y_{s}\right) \cdot dW_{s},
\label{definition of Z_t}
\end{equation}%
$L_{t}^{X}$ and $L_{t}^{Y}$ denote the boundary local times of the
reflecting Brownian motions $X_{t}$ and respectively $Y_{t}$, and
$\nu \left( z\right) =-z$, $z\in \partial U$, denotes the inward
unit normal vector field on the boundary of $U$. The processes
$X_{t}$ and $Y_{t}$ evolve according to (\ref{mirror coupling in U})
above for $t < \tau $,
where $\tau $ is the coupling time (possibly infinite)%
\begin{equation*}
\tau =\inf \left\{ t>0:X_{t}=Y_{t}\right\} \in \mathbb{R}\cup
\left\{ \infty \right\} ,
\end{equation*}%
and the two processes evolve together (i.e. $X_{t}=Y_{t}$) after
they have coupled.

Setting%
\begin{equation}
\left\{
\begin{array}{c}
X_{t}-Y_{t}=\left( m_{t},n_{t}\right) \\
X_{t}+Y_{t}=\left( p_{t},q_{t}\right)%
\end{array}%
\right. ,\qquad t\geq 0,  \label{processes M N P Q}
\end{equation}%
we have%
\begin{equation*}
\left\{
\begin{array}{c}
m_{t}=x^{1}-y^{1}+W_{t}^{1}-Z_{t}^{1}-\int_{0}^{t}X_{s}^{1}dL_{s}^{X}+%
\int_{0}^{t}Y_{s}^{1}dL_{s}^{Y} \\
n_{t}=x^{2}-y^{2}+W_{t}^{2}-Z_{t}^{2}-\int_{0}^{t}X_{s}^{2}dL_{s}^{X}+%
\int_{0}^{t}Y_{s}^{2}dL_{s}^{Y} \\
p_{t}=x^{1}+y^{1}+W_{t}^{1}+Z_{t}^{1}-\int_{0}^{t}X_{s}^{1}dL_{s}^{X}-%
\int_{0}^{t}Y_{s}^{1}dL_{s}^{Y} \\
q_{t}=x^{2}+y^{2}+W_{t}^{2}-Z_{t}^{2}-\int_{0}^{t}X_{s}^{2}dL_{s}^{X}-%
\int_{0}^{t}Y_{s}^{2}dL_{s}^{Y}%
\end{array}%
\right. ,
\end{equation*}%
for all $t < \tau $, where the superscript $1$ or $2$ indicates the
first, respectively the second component of the given point (when
necessary, in order to avoid confusion with powers, we will use
parantheses in order to indicate the square of a number).

Using the definition (\ref{definition of Z_t}) of $Z_{t}$, we obtain%
\begin{equation}
m_{t}=x^{1}-y^{1}+2\int_{0}^{t}\frac{m_{s}}{m_{s}^{2}+n_{s}^{2}}\left(
m_{s}dW_{s}^{1}+n_{s}dW_{s}^{2}\right)
-\int_{0}^{t}X_{s}^{1}dL_{s}^{X}+\int_{0}^{t}Y_{s}^{1}dL_{s}^{Y},
\label{process m}
\end{equation}%
and therefore the quadratic variation of $M_{t}$ is given by%
\begin{equation*}
\left\langle m\right\rangle _{t}=4\int_{0}^{t}\frac{m_{s}^{2}}{%
m_{s}^{2}+n_{s}^{2}}ds.
\end{equation*}

Similarly, we can obtain the following
\begin{equation}
\left\{
\begin{array}{l}
n_{t}=x^{2}-y^{2}+2\int_{0}^{t}\frac{n_{s}}{m_{s}^{2}+n_{s}^{2}}\left(
m_{s}dW_{s}^{1}+n_{s}dW_{s}^{2}\right)
-\int_{0}^{t}X_{s}^{2}dL_{s}^{X}+\int_{0}^{t}Y_{s}^{2}dL_{s}^{Y} \\
p_{t}=x^{1}+y^{1}+2\int_{0}^{t}\frac{n_{s}}{m_{s}^{2}+n_{s}^{2}}\left(
n_{s}dW_{s}^{1}-m_{s}dW_{s}^{2}\right)
-\int_{0}^{t}X_{s}^{1}dL_{s}^{X}-\int_{0}^{t}Y_{s}^{1}dL_{s}^{Y} \\
q_{t}=x^{2}+y^{2}+2\int_{0}^{t}\frac{n_{s}}{m_{s}^{2}+n_{s}^{2}}\left(
-n_{s}dW_{s}^{1}+m_{s}dW_{s}^{2}\right)
-\int_{0}^{t}X_{s}^{1}dL_{s}^{X}-\int_{0}^{t}Y_{s}^{1}dL_{s}^{Y}%
\end{array}%
\right. ,  \label{processes n, p, q}
\end{equation}%
and the corresponding quadratic variation processes:%
\begin{equation}\label{quadratic variations of m, n, p, q}
\left\{
\begin{split}
&\langle m\rangle _{t}=\langle q\rangle _{t}=4\int_{0}^{t}\frac{m_{s}^{2}}{%
m_{s}^{2}+n_{s}^{2}}ds \\
&\langle n\rangle _{t}=\langle p\rangle _{t}=4\int_{0}^{t}\frac{n_{s}^{2}}{%
m_{s}^{2}+n_{s}^{2}}ds \\
&\langle m,n\rangle _{t}=-\langle p,q\rangle _{t}=4\int_{0}^{t}\frac{%
m_{s}n_{s}}{m_{s}^{2}+n_{s}^{2}}ds \\
&\langle m,p\rangle _{t}=\langle m,q\rangle _{t}=\langle n,p\rangle
_{t}=\langle n,q\rangle _{t}=0%
\end{split}
\right.
\end{equation}

\section{Main results\label{Main results}}

For $t<\tau $, the equation of the line of symmetry $M_{t}$ between
$X_{t}$ and $Y_{t}$ is given by
\begin{equation*}
\left( z-\frac{X_{t}+Y_{t}}{2}\right) \cdot \left(
X_{t}-Y_{t}\right) =0,
\end{equation*}%
or equivalent%
\begin{equation*}
m_{t}u+n_{t}v-\frac{1}{2}\left( m_{t}p_{t}+n_{t}q_{t}\right)
=0\text{,}
\end{equation*}%
where $z=\left( u,v\right) $.

\begin{figure}[!ht]
\centering
\includegraphics*[width=2.1715in]{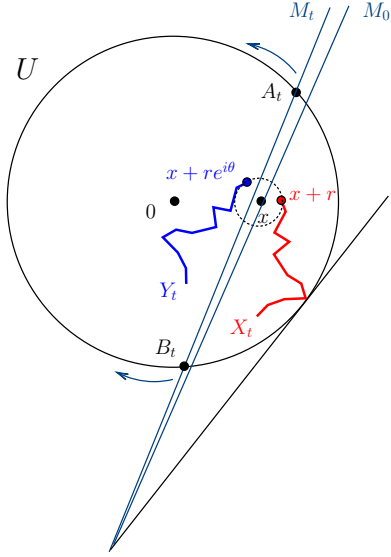}
\caption{The mirror coupling of reflecting Brownian motions in $U$.}
\label{mirror coupling figure}
\end{figure}

The intersection of the mirror $M_{t}$ with the boundary of $U$
consists of two points $A_{t}=\left( a_{t}^{1},a_{t}^{2}\right) $
and $B_{t}=\left( b_{t}^{1},b_{t}^{2}\right) $. The ideea of the
proof is that the mirror $ M_{t}$ moves to the left, in such a way
that $a_{t}^{1}$ and $b_{t}^{1}$ are always decreasing (see Figure
\ref{mirror coupling figure}).

To show this, we consider the stopping time $\tau _{1}=\inf \left\{
t>0:0\in M_{t}\right\} $, and we consider the processes

\begin{equation}
\left\{
\begin{array}{c}
u_{t}=\dfrac{2m_{t}}{m_{t}p_{t}+n_{t}q_{t}} \\
\\
v_{t}=\dfrac{2n_{t}}{m_{t}p_{t}+n_{t}q_{t}}%
\end{array}%
\right. ,\qquad t<\tau \wedge \tau _{1}.
\end{equation}

Note that for $t<\tau \wedge \tau _{1}$ we have%
\begin{equation*}
m_{t}p_{t}+n_{t}q_{t}=\left( X_{t}-Y_{t}\right) \cdot \left(
X_{t}+Y_{t}\right) =\left\vert X_{t}\right\vert ^{2}-\left\vert
Y_{t}\right\vert ^{2}\neq 0
\end{equation*}%
and also%
\begin{equation*}
m_{t}^{2}+n_{t}^{2}=\left\Vert X_{t}-Y_{t}\right\Vert ^{2}\neq
0\text{,}
\end{equation*}%
so the formulae above are well defined.

\begin{lemma}
\label{u and v proc of bdd var}For $t<\tau \wedge \tau _{1}$,
$u_{t}$ and $v_{t}$ defined above are processes of bounded variation, explictly given by%
\begin{equation*}
du_{t}=\frac{2}{m_{t}p_{t}+n_{t}q_{t}}\left( u_{t}-\frac{m_{t}+p_{t}}{2}%
\right) dL_{t}^{X}
\end{equation*}%
and%
\begin{equation*}
dv_{t}=\frac{2}{m_{t}p_{t}+n_{t}q_{t}}\left( v_{t}-\frac{n_{t}+q_{t}}{2}%
\right) dL_{t}^{X}
\end{equation*}
\end{lemma}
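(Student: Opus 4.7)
The plan is to apply Itô's formula to $u_t = 2m_t/D_t$, with $D_t := m_t p_t + n_t q_t = |X_t|^2 - |Y_t|^2$, and to show that every stochastic and Itô-correction term cancels, leaving only a $dL_t^X$ contribution that simplifies to the claimed formula. The argument for $v_t = 2n_t/D_t$ will be identical after an obvious relabeling.

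First I would compute $dD_t$. Since $\langle m,p\rangle = \langle n,q\rangle = 0$ by (\ref{quadratic variations of m, n, p, q}), the ordinary product rule gives $dD_t = p_t\,dm_t + m_t\,dp_t + q_t\,dn_t + n_t\,dq_t$ with no extra Itô contribution. Plugging in the martingale parts from (\ref{process m}) and (\ref{processes n, p, q}) and collecting the coefficients of $dW^1, dW^2$ yields the key identity
$$dD_t^{\mathrm{mart}} = \frac{2 D_t}{m_t^2+n_t^2}\bigl(m_t\,dW_t^1 + n_t\,dW_t^2\bigr) = \frac{D_t}{m_t}\,dm_t^{\mathrm{mart}}.$$
This proportionality of the martingale parts of $m_t$ and $D_t$ is the engine of the proof: it immediately forces the first-order stochastic contribution to $d(2m_t/D_t)$, namely $\frac{2}{D_t}\,dm_t^{\mathrm{mart}}-\frac{2m_t}{D_t^2}\,dD_t^{\mathrm{mart}}$, to vanish. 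The same proportionality makes the second-order Itô correction $-\frac{2}{D_t^2}\,d\langle m,D\rangle_t + \frac{2m_t}{D_t^3}\,d\langle D\rangle_t$ collapse, because a direct check gives $d\langle m\rangle_t/m_t^2 = d\langle m,D\rangle_t/(m_t D_t) = d\langle D\rangle_t/D_t^2 = \tfrac{4}{m_t^2+n_t^2}\,dt$.

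What remains is the bounded variation part. From (\ref{mirror coupling in U}), $dm_t^{\mathrm{bv}} = -X_t^1\,dL_t^X + Y_t^1\,dL_t^Y$, and a short computation using $m_t+p_t=2X_t^1$, $p_t-m_t=2Y_t^1$ (with the analogous identities in the second coordinate) gives $dD_t^{\mathrm{bv}} = -2|X_t|^2\,dL_t^X + 2|Y_t|^2\,dL_t^Y$, which simplifies to $-2\,dL_t^X + 2\,dL_t^Y$ because $|X_t|=1$ on the support of $L^X$ (and analogously for $L^Y$). For $t<\tau_1$ the sign of $D_t = |X_t|^2-|Y_t|^2$ is preserved by continuity, so the starting configuration fixes which local time is active: in the relevant case $|x|>|y|$ one has $|X_t|>|Y_t|$ throughout $[0,\tau\wedge\tau_1)$, which forces $Y_t$ to stay strictly inside $U$ and $L_t^Y\equiv 0$. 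The surviving coefficient of $dL_t^X$ collapses to $-\tfrac{2X_t^1}{D_t} + \tfrac{4m_t}{D_t^2} = \tfrac{2}{D_t}\bigl(u_t - X_t^1\bigr) = \tfrac{2}{D_t}\bigl(u_t - \tfrac{m_t+p_t}{2}\bigr)$, which is the asserted formula; the derivation for $v_t$ is obtained verbatim by replacing $(m_t, X_t^1, p_t)$ with $(n_t, X_t^2, q_t)$ throughout.

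The only real obstacle is algebraic bookkeeping — four semimartingales, their mutual quadratic variations, and two boundary local times — but the conceptual content is just the proportionality $D_t\, dm_t^{\mathrm{mart}} = m_t\, dD_t^{\mathrm{mart}}$ (and similarly for $n_t$). Once that is noticed, the Brownian noise in $d(2m_t/D_t)$ cancels automatically, and the rest is routine simplification of the boundary terms.
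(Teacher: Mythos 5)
Your proof is correct, and at its core it is the same computation as the paper's: an It\^{o} expansion in which the martingale part and the second--order correction cancel identically, after which the surviving $dL^{X}$ term is simplified on the support of $L^{X}$ using $|X_{t}|=1$, i.e.\ $\left(\frac{m_t+p_t}{2}\right)^{2}+\left(\frac{n_t+q_t}{2}\right)^{2}=1$. The difference is organizational: the paper applies It\^{o}'s formula directly to the four--variable function $f(m,n,p,q)=m/(mp+nq)$ and cancels terms using the full covariation table (\ref{quadratic variations of m, n, p, q}), whereas you first form $D_{t}=m_{t}p_{t}+n_{t}q_{t}=|X_{t}|^{2}-|Y_{t}|^{2}$, observe that the martingale parts of $m_{t}$ and $D_{t}$ are both multiples of $m_{t}\,dW_{t}^{1}+n_{t}\,dW_{t}^{2}$ (it is cleaner to state the proportionality this way rather than as $dD_{t}^{\mathrm{mart}}=(D_{t}/m_{t})\,dm_{t}^{\mathrm{mart}}$, since $m_{t}$ may vanish), and then work with the two--variable function $2m/D$; this makes both cancellations immediate and your coefficient $\frac{2}{D_t}\bigl(u_{t}-X_{t}^{1}\bigr)$ agrees with the stated formula. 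What your write-up adds, and the paper's proof passes over silently, is the treatment of the $dL^{Y}$ terms: the raw It\^{o} expansion does contain a $dL^{Y}$ contribution with a generally nonzero coefficient, and discarding it requires precisely your observation that the sign of $D_{t}$ is preserved up to $\tau_{1}$, so that with the starting configuration $|X_{0}|\geq |Y_{0}|$ used in Lemma \ref{mirror moves left} one has $|Y_{t}|<|X_{t}|\leq 1$ and hence $L^{Y}$ is constant on $[0,\tau\wedge\tau_{1})$. This is a genuine (if easily repaired) gap in the paper's argument as written, and your version closes it; note only that the lemma as stated in the paper carries no hypothesis on the starting points, so strictly speaking the formula with only $dL^{X}$ should be read under that implicit assumption $|X_{0}|\geq|Y_{0}|$, which is the case in its sole application.
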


\begin{proof} Applying the It\^{o} formula to the $C^{2}$ function
$f\left( m,n,p,q\right)
=\frac{m}{mp+nq}$ and processes $m_{t},n_{t},p_{t}$ and $q_{t}$, we have%
\begin{eqnarray*}
\frac{1}{2}du_{t}&=&d\left( \frac{m_{t}}{m_{t}p_{t}+n_{t}q_{t}}\right) \\
&=&\frac{1}{\left( m_{t}p_{t}+n_{t}q_{t}\right) ^{2}}\left(
n_{t}q_{t}dm_{t}-m_{t}q_{t}dn_{t}-m_{t}^{2}dp_{t}-m_{t}n_{t}dq_{t}\right) \\
&&+\frac{1}{2\left( m_{t}p_{t}+n_{t}q_{t}\right) ^{3}}\left(
-2n_{t}p_{t}q_{t}d\langle m\rangle _{t}+2m_{t}q_{t}^{2}d\langle
n\rangle _{t}+2m_{t}^{3}d\langle p\rangle
_{t}+2m_{t}n_{t}^{2}d\langle q\rangle
_{t}\right) \\
&&+\frac{1}{2\left( m_{t}p_{t}+n_{t}q_{t}\right) ^{3}}\left( 2\left(
m_{t}p_{t}q_{t}-n_{t}q_{t}^{2}\right) d\langle m,n\rangle
_{t}+4m_{t}^{2}n_{t}d\langle p,q\rangle _{t}\right) .
\end{eqnarray*}

Using the relations (\ref{process m}) and (\ref{processes n, p, q})
it can be seen that the martingale part in the last expression above
reduces to
zero, and using (\ref{quadratic variations of m, n, p, q}) we obtain%
\begin{equation*}
\frac{1}{2}du_{t}=\frac{1}{\left( m_{t}p_{t}+n_{t}q_{t}\right) ^{2}}\left[ -%
\frac{\left( m_{t}+p_{t}\right) }{2}\left( n_{t}q_{t}-m_{t}^{2}\right) +m_{t}%
\frac{\left( n_{t}+q_{t}\right) ^{2}}{2}\right] dL_{t}^{X}.
\end{equation*}

Using the fact that $L_{t}^{X}$ increases only when $X_{t}\in
\partial U$, that is only when $\left\vert X_{t}\right\vert
^{2}=\frac{\left( n_{t}+q_{t}\right) ^{2}}{4}+\frac{\left(
m_{t}+p_{t}\right) ^{2}}{4}=1$, we
obtain%
\begin{eqnarray*}
\frac{1}{2}du_{t} &=&\frac{1}{\left( m_{t}p_{t}+n_{t}q_{t}\right)
^{2}}\left[ 2m_{t}-\frac{m_{t}+p_{t}}{2}\left(
m_{t}p_{t}+n_{t}q_{t}\right) \right]
dL_{t}^{X} \\
&=&\frac{1}{m_{t}p_{t}+n_{t}q_{t}}\left(
u_{t}-\frac{m_{t}+p_{t}}{2}\right) dL_{t}^{X}.
\end{eqnarray*}

A similar proof gives the expression for the process $v_{t}$,
concluding the proof. \end{proof}

Next, we prove that the mirror coupling in the unit disk leaves
invariant the \textquotedblleft left\textquotedblright\ and
\textquotedblleft right\textquotedblright\ positions of the starting
points of the coupling, as follows:

\begin{lemma}
\label{mirror moves left}If $X_{t},Y_{t}$ is a mirror coupling of
reflecting
Brownian motions in $U$ with starting points $X_{0}=x+r$, $%
Y_{0}=x+re^{i\theta }$, $x\in \left( 0,1\right) $, $r\in \left(
0,\min \left\{ x,1-x\right\} \right) $ and $\theta \in \lbrack
0,2\pi )$, then for all time $t<\tau \wedge \tau _{1}$ the mirror
$M_{t}$ moves away from the point $x$, in such a way that
$a_{t}^{1}$ and $b_{t}^{1}$ are non-increasing
functions of $t$, where $A_{t}=\left( a_{t}^{1},a_{t}^{2}\right) $ and $%
B_{t}=\left( b_{t}^{1},b_{t}^{2}\right) $ are the points of intersection of $%
M_{t}$ with the boundary of $U$ (see Figure \ref{mirror coupling
figure}).
\end{lemma}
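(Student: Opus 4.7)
The approach is to compute $da_t^1$ and $db_t^1$ by implicit differentiation and verify that each resulting signed measure is nonpositive on $[0,\tau\wedge\tau_1)$.  By Lemma \ref{u and v proc of bdd var}, the processes $(u_t, v_t)$ are of bounded variation and driven only by $dL_t^X$ (note that $L_t^Y\equiv 0$ on this interval, since $|X_t|>|Y_t|$ there prevents $Y_t$ from reaching $\partial U$).  The coordinates $a_t^1, a_t^2$, defined implicitly as smooth functions of $(u_t,v_t)$ through $u_t a_t^1 + v_t a_t^2 = 1$ and $(a_t^1)^2+(a_t^2)^2=1$, are therefore themselves of bounded variation.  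The principal obstacle is identifying the correct geometric invariant that pins down the sign; the plan is to establish the monotonicity $u_t\ge 1/x > 1$ throughout $[0,\tau\wedge\tau_1)$, which will force $A_t, B_t$ to lie in opposite open half-planes, and from there the sign of the differentials follows.

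\textbf{Implicit differentiation.}  Differentiating the defining system yields
\begin{equation*}
da_t^1 \;=\; -\frac{a_t^2 \bigl(a_t^1\, du_t + a_t^2\, dv_t\bigr)}{u_t a_t^2 - v_t a_t^1},
\end{equation*}
and an analogous identity for $db_t^1$.  Substituting the formulas of Lemma \ref{u and v proc of bdd var}, applying $u_t a_t^1 + v_t a_t^2 = 1$, and using $X_t^1=(m_t+p_t)/2$, $X_t^2=(n_t+q_t)/2$ together with $|X_t|=1$ on $\operatorname{supp}(dL_t^X)$, the bracket collapses to
\begin{equation*}
a_t^1\, du_t + a_t^2\, dv_t \;=\; \frac{1}{D_t}\,|A_t - X_t|^2\, dL_t^X,
\end{equation*}
which is a nonnegative measure, since $D_t=m_tp_t+n_tq_t=|X_t|^2-|Y_t|^2>0$ throughout $[0,\tau\wedge\tau_1)$ (as $D_0>0$ and $D_t$ does not vanish before $\tau_1$).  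Thus the signs of $da_t^1$ and $db_t^1$ reduce to those of $-a_t^2/(u_ta_t^2-v_ta_t^1)$ and its $B_t$-analogue.

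\textbf{Monotonicity of $u_t$ and closure.}  A direct evaluation at $t=0$ gives
\begin{equation*}
u_0 \;=\; \frac{2m_0}{D_0} \;=\; \frac{2r(1-\cos\theta)}{2xr(1-\cos\theta)} \;=\; \frac{1}{x} \;>\; 1,
\end{equation*}
the $\theta$-dependence canceling exactly.  Since $X_t^1\le |X_t|=1$ whenever $dL_t^X>0$, the formula $du_t=\frac{2}{D_t}(u_t-X_t^1)\, dL_t^X$ of Lemma \ref{u and v proc of bdd var} forces $du_t\ge 0$ as long as $u_t\ge 1$; hence $u_t$ is non-decreasing and $u_t\ge 1/x$ throughout $[0,\tau\wedge\tau_1)$.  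Consequently the line $M_t$ meets the real axis at $(1/u_t, 0)\in (0,1)\times\{0\}$, strictly inside the open disk, so the two intersections $A_t, B_t$ lie in opposite open half-planes.  Label $A_t$ in the upper and $B_t$ in the lower half-plane, and write $(u_t, v_t)=\rho_t(\cos\phi_t, \sin\phi_t)$ with $\phi_t\in(-\pi/2,\pi/2)$ (since $u_t>0$) and $\psi_t=\arccos(1/\rho_t)\in(0,\pi/2)$.  Then $A_t$ sits at angle $\phi_t+\psi_t$ and $B_t$ at angle $\phi_t-\psi_t$, giving
\begin{equation*}
u_t a_t^2 - v_t a_t^1 \;=\; \rho_t \sin\psi_t \;>\; 0, \qquad u_t b_t^2 - v_t b_t^1 \;=\; -\rho_t \sin\psi_t \;<\; 0.
\end{equation*}
Inserting these sign data (together with $a_t^2>0$, $b_t^2<0$) into the formula of the previous paragraph yields $da_t^1\le 0$ and $db_t^1\le 0$ on $[0,\tau\wedge\tau_1)$, which is the claim.
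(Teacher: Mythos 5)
Your proof is correct, but it runs along a more computational route than the paper's. The paper never differentiates $A_t$ and $B_t$: it argues by contradiction that no point $P=(\alpha,\beta)\in U$ can pass from the right of $M_t$ to its left before $\tau\wedge\tau_1$, introducing $t_0=\inf\{t>t_1:\alpha u_t+\beta v_t<1\}$ and using Lemma \ref{u and v proc of bdd var} to get $\alpha u_{t_0}+\beta v_{t_0}\geq\alpha u_{t_1}+\beta v_{t_1}>1$, because the integrand $\alpha u_t+\beta v_t-(\alpha X_t^1+\beta X_t^2)\geq 1-\left\vert X_t\right\vert\,\left\vert(\alpha,\beta)\right\vert>0$ on the support of $dL_t^X$; continuity at $t_0$ then gives the contradiction, and the monotonicity of $a_t^1,b_t^1$ follows from this set-monotonicity of the half-plane to the right of $M_t$. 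You instead compute $da_t^1,db_t^1$ by implicit differentiation and check signs, which forces extra bookkeeping: you must first prove $u_t\geq 1/x>1$ so that $A_t,B_t$ stay off the real axis, the labelling is continuous, and the denominators $u_ta_t^2-v_ta_t^1=\pm\rho_t\sin\psi_t$ stay nonzero. Both arguments run on the same engine, namely $d\bigl(P\cdot(u_t,v_t)\bigr)=\tfrac{2}{D_t}\bigl(P\cdot(u_t,v_t)-P\cdot X_t\bigr)dL_t^X$ with $P\cdot X_t\leq 1$ on $\{\left\vert X_t\right\vert=1\}$: your identity $1-A_t\cdot X_t=\tfrac12\left\vert A_t-X_t\right\vert^2$ is the boundary-point case of the paper's interior-point estimate, and your monotonicity of $u_t$ is the same mechanism applied to $P=(1,0)$. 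Your version buys explicit formulas and quantitative information (the intercept $1/u_t$ of $M_t$ with the real axis is nonincreasing and never exceeds $x$), while the paper's is shorter, softer, and avoids all nondegeneracy issues. One step you should spell out: the inference that $du_t\geq0$ while $u_t\geq1$ implies $u_t$ nondecreasing needs the standard first-crossing argument (set $\sigma=\inf\{t:u_t<1\}$; on $[0,\sigma)$ the measure $du_t$ is nonnegative, so $u_\sigma\geq u_0=1/x>1$, contradicting $u_\sigma\leq1$ if $\sigma<\tau\wedge\tau_1$), which is exactly the device the paper uses with its stopping time $t_0$; also note the case $\theta=0$ is vacuous since then $\tau=0$.
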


\begin{proof} Assuming the contrary, there exists a point $P=\left(
\alpha ,\beta \right) \in U$ and times $0<t_{1}<t_{2}<\tau \wedge
\tau _{1}$ such that $P$ is to the
right of $M_{t_{1}}$ and to the left of $M_{t_{2}}$, that is we have%
\begin{equation*}
\left\{
\begin{array}{c}
m_{t_{1}}\alpha +n_{t_{1}}\beta -\frac{1}{2}\left(
m_{t_{1}}p_{t_{1}}+n_{t_{1}}q_{t_{1}}\right) >0 \\
m_{t_{2}}\alpha +n_{t_{2}}\beta -\frac{1}{2}\left(
m_{t_{2}}p_{t_{2}}+n_{t_{2}}q_{t_{2}}\right) <0%
\end{array}%
\right.
\end{equation*}%
or equivalent%
\begin{equation*}
\left\{
\begin{array}{c}
\alpha u_{t_{1}}+\beta v_{t_{1}}>1 \\
\alpha u_{t_{2}}+\beta v_{t_{2}}<1%
\end{array}%
\right.
\end{equation*}

Setting $t_{0}=\inf \left\{ t>t_{1}:\alpha u_{t}+\beta
v_{t}<1\right\} \in
\left( t_{1},t_{2}\right) $, from the previous lemma we obtain%
\begin{eqnarray*}
&&\alpha u_{t_{0}}+\beta v_{t_{0}} \\
&=&\alpha u_{t_{1}}+\beta v_{t_{1}}+\int_{t_{1}}^{t_{0}}\frac{2}{%
m_{t}p_{t}+n_{t}q_{t}}\left( \alpha u_{t}+\beta v_{t}-\left( \frac{%
m_{t}+p_{t}}{2}\alpha +\frac{n_{t}+q_{t}}{2}\beta \right) \right)
dL_{t}^{X}
\\
&\geq &\alpha u_{t_{1}}+\beta v_{t_{1}} \\
&>&1,
\end{eqnarray*}%
since $\alpha u_{t}+\beta v_{t}\geq 1$ for $t\in \lbrack t_{1},t_{0}]$ and%
\begin{equation*}
\left\vert \left( \frac{m_{t}+p_{t}}{2}\alpha
+\frac{n_{t}+q_{t}}{2}\beta \right) \right\vert =\left\vert \alpha
X_{t}^{1}+\beta X_{t}^{2}\right\vert \leq \left\vert
X_{t}\right\vert \left\vert \left( \alpha ,\beta \right) \right\vert
<1.
\end{equation*}

By the continuity of the processes $u_{t}$ and $v_{t}$ we also must have $%
\alpha u_{t_{0}}+\beta v_{t_{0}}=1$, a contradiction, which proves
the claim. \end{proof}

From the previous lemma we obtain the following:

\begin{theorem}
\label{theorem 1}For any $x\in \left( 0,1\right) $, $r,\varepsilon
\in \left( 0,\min \left\{ x,1-x\right\} \right) $ and $\theta \in
\lbrack 0,2\pi
)$%
\begin{equation}
E^{x+re^{i\theta }}1_{B\left( x,\varepsilon \right) }\left(
Y_{t}\right) \leq E^{x+r}1_{B\left( x,\varepsilon \right) }\left(
X_{t}\right) ,\qquad t\geq 0.
\end{equation}
\end{theorem}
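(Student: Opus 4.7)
The plan is to use the mirror coupling $(X_t, Y_t)$ from Section~\ref{Preliminaries} started at $X_0 = x+r$ and $Y_0 = x+re^{i\theta}$, whose marginals are precisely the two reflecting Brownian motions appearing in the statement, and to prove the pathwise inequality $1_{B(x,\varepsilon)}(Y_t) \leq 1_{B(x,\varepsilon)}(X_t)$ almost surely; taking expectations then gives the theorem. The guiding observation is that $|X_0 - x| = |Y_0 - x| = r$, so the point $x$ lies on the initial mirror $M_0$.

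First I would handle $t < \tau \wedge \tau_1$. Applying Lemma~\ref{u and v proc of bdd var} to the concrete point $(x, 0)$, and using that $(m_t + p_t)/2 = X_t^1$, yields $d(xu_t) = \frac{2}{m_t p_t + n_t q_t}(xu_t - xX_t^1)\, dL_t^X$; combined with $xu_0 = 1$, with $xX_t^1 \leq x < 1$, and with $m_t p_t + n_t q_t > 0$ on $[0, \tau_1)$, a standard comparison forces $xu_t \geq 1$ throughout $[0, \tau \wedge \tau_1)$, which is precisely the condition that $x$ lies on the closed $X$-half-plane bounded by $M_t$. In local coordinates where $M_t$ is the vertical axis and $x = (d, 0)$ with $d \geq 0$, the trivial inequality $(u-d)^2 + v^2 < (u+d)^2 + v^2$ for $u > 0$ shows that the reflection of $B(x, \varepsilon)$ across $M_t$, intersected with the $X$-half-plane, is contained in $B(x, \varepsilon)$. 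Since $Y_t$ is the reflection of $X_t$ across $M_t$ and $X_t$ lies in the $X$-half-plane, the pathwise inclusion $\{Y_t \in B(x,\varepsilon)\} \subset \{X_t \in B(x,\varepsilon)\}$ follows on $\{t < \tau \wedge \tau_1\}$.

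On the event $\tau_1 < \tau$, I would handle $t \in [\tau_1, \tau)$ by a separate symmetry argument. Writing $L := M_{\tau_1}$, a diameter of $U$ through the origin, the invariance of $U$ and of the inward normal field $\nu$ under reflection across $L$ makes the ansatz ``$Y_t$ is the reflection of $X_t$ across $L$, and $M_t \equiv L$'' a solution of the mirror coupling SDE on $[\tau_1, \tau)$ with the correct initial data; by pathwise uniqueness this is the actual coupling on that interval. Continuity of $M_t$ at $\tau_1$ keeps $x$ on the closed $X$-side of the now-stationary line $L$, and the same one-line reflection inequality as before gives $\{Y_t \in B(x, \varepsilon)\} \subset \{X_t \in B(x, \varepsilon)\}$ on $\{\tau_1 \leq t < \tau\}$. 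On $\{t \geq \tau\}$ the processes coincide, so $1_{B(x,\varepsilon)}(Y_t) \leq 1_{B(x,\varepsilon)}(X_t)$ holds pathwise for all $t \geq 0$ and the theorem follows by taking expectations.

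The main obstacle is the post-$\tau_1$ analysis: the denominator $m_t p_t + n_t q_t = |X_t|^2 - |Y_t|^2$ driving Lemma~\ref{mirror moves left} vanishes and changes sign at $\tau_1$, so that lemma cannot be extended past $\tau_1$ directly. The symmetry/uniqueness argument above, freezing the mirror on a diameter once it has swept through the origin, is the natural replacement; once that structural fact is in hand, the geometric content of the proof reduces to the same elementary reflection inequality used before $\tau_1$.
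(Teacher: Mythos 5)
Your proposal is correct and follows essentially the same route as the paper's proof: run the mirror coupling from $X_0=x+r$, $Y_0=x+re^{i\theta}$, use the bounded-variation dynamics of $(u_t,v_t)$ to keep $x$ on the closed $X$-side of the mirror up to $\tau\wedge\tau_1$, freeze the mirror on the diameter after $\tau_1$ by symmetry, and conclude the pathwise inclusion $\{Y_t\in B(x,\varepsilon)\}\subset\{X_t\in B(x,\varepsilon)\}$ before taking expectations. The only differences are presentational: you apply Lemma \ref{u and v proc of bdd var} directly at the point $(x,0)$ instead of citing Lemma \ref{mirror moves left}, and you spell out the reflection inequality and the post-$\tau_1$ symmetry/uniqueness step that the paper merely asserts.
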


\begin{proof} Let $X_{t},Y_{t}$ be a mirror coupling of reflecting
Brownian motions in $U$ with starting points $X_{0}=x+r$,
$Y_{0}=x+re^{i\theta }$, $x\in \left( 0,1\right) $, $r,\varepsilon
\in \left( 0,\min \left\{ x,1-x\right\} \right) $ and $\theta \in
\lbrack 0,2\pi )$. From the previous lemma it follows that the
mirror $M_{t}$ does not separate the points $x$ and $X_{t}$ (hence
it separates the points $x$ and $Y_{t}$) for $t<\tau \wedge \tau
_{1}$.

Since for $t\geq \tau \wedge \tau _{1}$ either the two processes
$X_{t}$ and $Y_{t}$ are symmetric with respect to the (fixed) line
$M_{\tau \wedge \tau _{1}}$ passing through the origin (for $t\in
\left( \tau \wedge \tau _{1},\tau\right) $) or they have coupled
(for $t\in \left( \tau,\infty \right) $), it follows that in this
case $Y_{t}$ cannot reach $B\left( x,\varepsilon \right)$ before
coupling with $X_{t}$, and combining with the previous case we
obtain that for all times $t \ge 0$, $Y_{t}\in B\left( x,\varepsilon
\right) $ implies $X_{t}\in B\left( x,\varepsilon \right)$, and the
claim follows.
\end{proof}

Denoting by $p_{U}(t,x,y)$ the Neumann heat kernel for the unit disk
(or equivalently, the transition density of reflecting Brownian
motion in $U$), we have the following:

\begin{lemma}
\label{lema1} For any $t>0$ and $x,y\in U$ we have the following
representation formula%
\begin{equation}
p_{U}(t,x,y)=\underset{\varepsilon \rightarrow 0}{\lim
}\frac{1}{\left\vert B(y,r)\right\vert }E^{x}1_{B(y,\varepsilon
)}(W_{t}),
\end{equation}%
where $W_{t}$ is a reflecting Brownian motion in the unit disk $U$
with $W_{0}=x$ and $E^{x}$ denotes the corresponding expectation
with respect to the probability measure $P^{x}$.
\end{lemma}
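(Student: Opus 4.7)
The plan is to recognize this as essentially a tautological restatement of what it means for $p_U(t,x,y)$ to be the transition density of the reflecting Brownian motion $W_t$, combined with a standard Lebesgue differentiation argument. Since the Neumann heat kernel $p_U(t,x,\cdot)$ is by definition the transition density of $W_t$ with respect to Lebesgue measure on $\overline{U}$, we have for every Borel set $A\subset\overline{U}$
\begin{equation*}
E^{x} 1_{A}(W_t) \;=\; P^{x}(W_t\in A) \;=\; \int_{A} p_U(t,x,z)\,dz.
\end{equation*}

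First I would apply this identity with $A=B(y,\varepsilon)$ (intersected with $\overline{U}$ if necessary; for $y\in U$ and $\varepsilon$ small enough the ball lies in $U$, so the intersection is the whole ball). This gives
\begin{equation*}
\frac{1}{|B(y,\varepsilon)|}E^{x}1_{B(y,\varepsilon)}(W_t) \;=\; \frac{1}{|B(y,\varepsilon)|}\int_{B(y,\varepsilon)} p_U(t,x,z)\,dz.
\end{equation*}
Next I would pass to the limit $\varepsilon\to 0$. For fixed $t>0$ and $x\in U$, the map $z\mapsto p_U(t,x,z)$ is continuous on $\overline{U}$ (this follows from standard smoothness properties of the Neumann heat kernel on a smooth bounded domain, or alternatively from the symmetry $p_U(t,x,z)=p_U(t,z,x)$ together with the fact that $p_U(t,\cdot,x)$ solves the heat equation with Neumann boundary data and is smooth for $t>0$). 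Therefore the averages on the right-hand side converge to $p_U(t,x,y)$.

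The only point deserving mention is the continuity of the heat kernel in the spatial variable at $z=y$, which is what licenses the Lebesgue differentiation step pointwise rather than almost-everywhere; this is classical for the Neumann heat kernel on the smooth domain $U$ and I would simply cite it (for instance via \cite{Bass-Hsu2} or standard parabolic regularity) rather than reprove it. There is no real obstacle: the lemma is a bridge between the analytic object $p_U(t,x,y)$ and the probabilistic quantity $E^{x}1_{B(y,\varepsilon)}(W_t)$ used in Theorem \ref{theorem 1}, and its role is to translate the inequality of Theorem \ref{theorem 1} into the desired inequality $p_U(t,x+re^{i\theta},y)\le p_U(t,x+r,y)$ for the heat kernel after dividing by $|B(x,\varepsilon)|$ and letting $\varepsilon\to 0$.
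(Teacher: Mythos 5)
Your proposal is correct and follows essentially the same route as the paper, which proves the lemma simply by noting that $E^{x}1_{B(y,\varepsilon)}(W_{t})=\int_{B(y,\varepsilon)}p_{U}(t,x,z)\,dz$ and invoking the continuity of $p_{U}(t,x,\cdot)$ (citing \cite{PascuPascu} rather than parabolic regularity directly). Your write-up just fills in the Lebesgue-differentiation details that the paper leaves to the reference.
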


\begin{proof} Follows from the continuity of $p_{U}\left(
t,x,y\right) $, see for example \cite{PascuPascu} for a proof.
\end{proof}

Combining Theorem \ref{theorem 1} and Lemma \ref{lema1}, we obtain

\begin{proposition}
\label{extremum of transition density on circle}For any $x\in \left(
0,1\right) $, $r\in \left( 0,\min \left\{ x,1-x\right\} \right) $
and $\theta \in \lbrack 0,2\pi )$ we have%
\begin{equation}
p_{U}\left( t,x+re^{i\theta },x\right) \leq p_{U}\left(
t,x+r,x\right) ,\qquad t > 0.  \label{monot of transition densities}
\end{equation}
\end{proposition}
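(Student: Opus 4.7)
The plan is to combine Theorem \ref{theorem 1} with the representation formula in Lemma \ref{lema1}, using the fact that the two expectations in Theorem \ref{theorem 1} depend only on the marginal laws of $X_t$ and $Y_t$, each of which is a single reflecting Brownian motion starting at the prescribed point.

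First I would invoke Lemma \ref{lema1} at the target point $x$ with the two different starting points $x+r$ and $x+re^{i\theta}$, writing
\begin{equation*}
p_U(t, x+r, x)=\lim_{\varepsilon\to 0}\frac{1}{|B(x,\varepsilon)|}\,E^{x+r}\,1_{B(x,\varepsilon)}(W_t),
\end{equation*}
and
\begin{equation*}
p_U(t, x+re^{i\theta}, x)=\lim_{\varepsilon\to 0}\frac{1}{|B(x,\varepsilon)|}\,E^{x+re^{i\theta}}\,1_{B(x,\varepsilon)}(W_t),
\end{equation*}
where in each formula $W_t$ denotes a (free) reflecting Brownian motion with the indicated starting point. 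This is precisely the representation from Lemma \ref{lema1}, with the roles of source and target adjusted.

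Next I would note that in the mirror coupling $(X_t,Y_t)$ constructed in Theorem \ref{theorem 1} with $X_0=x+r$ and $Y_0=x+re^{i\theta}$, each of the processes $X_t$ and $Y_t$ is individually a reflecting Brownian motion in $U$ starting at its respective starting point. Hence
\begin{equation*}
E^{x+r}\,1_{B(x,\varepsilon)}(W_t)=E\,1_{B(x,\varepsilon)}(X_t)\quad\text{and}\quad E^{x+re^{i\theta}}\,1_{B(x,\varepsilon)}(W_t)=E\,1_{B(x,\varepsilon)}(Y_t),
\end{equation*}
so that Theorem \ref{theorem 1} directly yields
\begin{equation*}
E^{x+re^{i\theta}}\,1_{B(x,\varepsilon)}(W_t)\le E^{x+r}\,1_{B(x,\varepsilon)}(W_t)
\end{equation*}
for every $\varepsilon\in(0,\min\{x,1-x\})$.

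Finally I would divide both sides by $|B(x,\varepsilon)|$ and pass to the limit $\varepsilon\to 0$, using Lemma \ref{lema1} on each side, to conclude the inequality (\ref{monot of transition densities}). There is essentially no obstacle here beyond checking that the parameter $r$ in Theorem \ref{theorem 1} can be taken to coincide with the desired displacement and that the radius $\varepsilon$ may be chosen small enough to satisfy the hypothesis of that theorem; the geometric content of the proof has already been absorbed into Lemma \ref{mirror moves left} and Theorem \ref{theorem 1}, so the present step is a short and essentially formal consequence.
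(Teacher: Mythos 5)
Your proposal is correct and is essentially the paper's own argument: the paper proves Proposition \ref{extremum of transition density on circle} precisely by combining Theorem \ref{theorem 1} with the representation formula of Lemma \ref{lema1}, dividing by $\left\vert B(x,\varepsilon)\right\vert$ and letting $\varepsilon \rightarrow 0$. The only minor remark is that Lemma \ref{lema1} already applies directly with source $x+r$ (resp. $x+re^{i\theta}$) and target $x$, so no adjustment of roles is needed.
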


\begin{remark}
The previous inequality can be interpreted as an extremal property
of Brownian motion as follows:%
\begin{equation*}
\max_{y\in \partial B\left( x,r\right) }p_{U}\left( t,x,y\right)
=p_{U}\left( t,x,x+r\right) ,\qquad t > 0,
\end{equation*}%
that is, among all reflecting Brownian motions in the unit disk with
starting points on the circle $\partial B\left( x,r\right) $, the
Brownian motion starting closest to the boundary (i.e. at the point
$x+r$) is most likely to return to (a neighborhood of) $x$. We will
see that this property will allow us to prove the desired
monotonicity in the Laugesen-Morpurgo conjecture.
\end{remark}

We can now prove the main monotonicity property, as follows:

\begin{theorem}
\label{main theorem}For any $x\in \left( 0,1\right) $ and $r\in
\left(
0,\min \left\{ x,1-x\right\} \right) $ we have%
\begin{equation}
\tfrac{1}{2\pi }\int_{0}^{2\pi }p_{U}\left( t,x+re^{i\theta
},x\right) d\theta \leq p_{U}\left( t,x+r,x\right) \leq p_{U}\left(
t,x+r,x+r\right) ,\quad t > 0.  \label{main inequality}
\end{equation}
\end{theorem}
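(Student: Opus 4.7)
The first inequality in \eqref{main inequality} is just an integrated form of Proposition~\ref{extremum of transition density on circle}: its pointwise bound $p_{U}(t,x+re^{i\theta},x)\le p_{U}(t,x+r,x)$ holds for every $\theta\in[0,2\pi)$, the right-hand side is $\theta$-independent, and averaging over $\theta$ gives the claim.

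For the second inequality, my plan is to re-run the mirror-coupling argument of Lemma~\ref{mirror moves left} and Proposition~\ref{extremum of transition density on circle}, but shifted so that the role of the center $x$ is played by the midpoint $c=x+r/2$. I would construct the mirror coupling with $X_{0}=x+r$ and $Y_{0}=x$, which form a diameter of the circle $\partial B(c,r/2)$; the hypothesis $r\in(0,\min\{x,1-x\})$ is exactly enough to guarantee $c\in(0,1)$ and $r/2\in(0,\min\{c,1-c\})$, so Lemma~\ref{mirror moves left}---whose proof uses only the boundary identity $|X_{t}|=1$ and not the specific value of the center---transfers without change. It yields, for $t<\tau\wedge\tau_{1}$, that the mirror $M_{t}$ drifts leftward (i.e.\ $a_{t}^{1},b_{t}^{1}$ are non-increasing) and never separates $c$ from $X_{t}$.

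The idea now is to track the target point $x+r$ rather than the center. Initially $x+r$ sits to the right of $M_{0}:u=c$ at Euclidean distance $r/2$, and as $M_{t}$ drifts leftward this distance is non-decreasing, so $x+r$ and $X_{t}$ stay on the same side of $M_{t}$ while $t<\tau\wedge\tau_{1}$; the reflected process $Y_{t}$ is therefore on the opposite side. After $\tau\wedge\tau_{1}$ the mirror becomes the fixed line $M_{\tau\wedge\tau_{1}}$ through the origin, exactly as in the proof of Theorem~\ref{theorem 1}, and the constraint $\cos\beta\le c$ on its angle $\beta$ (inherited from the Lemma applied up to $\tau_{1}^{-}$) puts the entire positive real ray from $0$ through $c$---in particular $x+r$---on the same side as $X_{t}$, at distance at least $(x+r)\sqrt{1-c^{2}}>0$ from $M_{\tau_{1}}$. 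Consequently, for every sufficiently small $\varepsilon>0$ one has $\{Y_{t}\in B(x+r,\varepsilon)\}\subseteq\{X_{t}\in B(x+r,\varepsilon)\}$ pathwise, and after taking expectations and sending $\varepsilon\to 0$ via Lemma~\ref{lema1} this gives $p_{U}(t,x,x+r)\le p_{U}(t,x+r,x+r)$; the heat-kernel symmetry $p_{U}(t,x+r,x)=p_{U}(t,x,x+r)$ is then the second inequality.

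The main obstacle, exactly as in Theorem~\ref{theorem 1}, is the post-$\tau\wedge\tau_{1}$ regime: one must verify that the fixed mirror $M_{\tau_{1}}$ keeps $x+r$ strictly separated from the half-plane containing $Y_{t}$. This is where the explicit lower bound $(x+r)\sqrt{1-c^{2}}$, supplied by the Lemma's constraint on the mirror's endpoints on $\partial U$, is essential---it rules out the degenerate case in which $M_{\tau_{1}}$ passes through $x+r$ (which would require the horizontal mirror $\cos\beta=1>c$) and thereby keeps the coupling argument alive.
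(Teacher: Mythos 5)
Your proof is correct and follows essentially the paper's own route: the first inequality by integrating Proposition~\ref{extremum of transition density on circle} over $\theta$, and the second by re-running the mirror-coupling argument of Lemma~\ref{mirror moves left} and Theorem~\ref{theorem 1} with the shifted center $c=x+r/2$ and starting points $x+r$ and $x$ (the paper merely asserts this ``similar argument'', citing \cite{PascuPascu}). One small repair: the distance from $x+r$ to the moving mirror need not be non-decreasing (the mirror can pivot), but the only fact you actually use---that $x+r$ remains on $X_t$'s side of $M_t$ for $t<\tau\wedge\tau_1$---follows directly from Lemma~\ref{mirror moves left}, since the chord $M_t\cap\overline{U}$ stays in the half-plane $\left\{u\le c\right\}$ while $x+r>c$, so your argument is unaffected.
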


\begin{proof} The first inequality follows by integrating the
inequality (\ref{monot of transition densities}) with respect to
$\theta \in [0, 2\pi)$.

The second inequality follows by an argument similar to the one in
Proposition \ref{extremum of transition density on circle} (see for
example \cite{PascuPascu}). \end{proof}

As a corollary of the above theorem, we obtain the following:

\begin{theorem}[Laugesen-Morpurgo conjecture]
\label{Laugesen-Morpurgo conjecture}The diagonal element of the
Neumann heat kernel of the unit disk is a radially increasing function, that is%
\begin{equation*}
p_{U}\left( t,x,x\right) <p_{U}\left( t,y,y\right) ,
\end{equation*}%
for any $t > 0$ and any $x,y\in \left( 0,1\right) $ with $x<y$.
\end{theorem}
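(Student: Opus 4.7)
The plan is to deduce the conjecture from Theorem~\ref{main theorem} by a first-order (perturbative) argument, and then to upgrade to strict monotonicity using real analyticity of the Neumann heat kernel. By the rotational symmetry of $U$, $p_U(t,x,x)$ depends only on $|x|$, so it suffices to show that the map $f(s) := p_U(t,s,s)$ is strictly increasing on $(0,1)$, where $s$ is identified with the point $(s,0)\in U$.

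Fix $s \in (0,1)$. I would apply the second inequality of Theorem~\ref{main theorem} with $x = s$ and $r \in (0, \min\{s, 1-s\})$, which reads
\begin{equation*}
p_U(t, s+r, s) \leq p_U(t, s+r, s+r).
\end{equation*}
Both sides are smooth functions of $r$ and coincide at $r = 0$ (each equalling $p_U(t,s,s)$), so the difference $\phi(r) := p_U(t, s+r, s+r) - p_U(t, s+r, s)$ satisfies $\phi \geq 0$ and $\phi(0) = 0$, forcing $\phi'(0) \geq 0$. Writing $\partial_1$ for the partial derivative of $(z,w)\mapsto p_U(t,z,w)$ in the first spatial coordinate of its first argument, the kernel symmetry $p_U(t,a,b) = p_U(t,b,a)$ together with the chain rule gives $\frac{d}{dr}p_U(t, s+r, s+r)|_{r=0} = 2\partial_1 p_U(t,s,s)$ and $\frac{d}{dr}p_U(t, s+r, s)|_{r=0} = \partial_1 p_U(t,s,s)$, whence $\phi'(0) = \partial_1 p_U(t,s,s) \geq 0$. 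Since $f'(s) = 2\partial_1 p_U(t,s,s)$ by the same symmetry, $f$ is non-decreasing on $(0,1)$.

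For the strict conclusion, I would use that the Neumann heat kernel on the unit disk is real-analytic in its spatial arguments for each fixed $t > 0$, so $f$ is real-analytic on $(0,1)$. A non-decreasing real-analytic function is strictly increasing unless it is constant, and constancy of $f$ would, via the spectral expansion $p_U(t, s, s) = \sum_n e^{-\mu_n t}\phi_n(s)^2$, force a non-trivial linear combination of squares of Neumann eigenfunctions of the disk to be constant along a radius; the first non-trivial eigenfunctions rule this out. Consequently $f(x) < f(y)$ for every $0 < x < y < 1$.

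The step I expect to be the main obstacle is the upgrade from the non-strict inequality $f' \geq 0$ to the strict statement $f(x) < f(y)$: the perturbative argument alone yields only non-strict monotonicity. The analyticity route above is soft but relies on the spectral structure of the disk. A purely probabilistic alternative would be to track when the mirror-coupling inequality of Theorem~\ref{theorem 1} is strict (for $r > 0$ and $\theta \notin \{0\}$) and propagate this strictness through the limiting procedure of Lemma~\ref{lema1}, but this bookkeeping appears considerably more delicate than invoking analyticity.
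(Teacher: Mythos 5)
Your derivation of the non-strict monotonicity is correct, but it runs along a genuinely different line than the paper's. The paper chains \emph{both} inequalities of Theorem \ref{main theorem}, writing $p_{U}(t,x+r,x+r)-p_{U}(t,x,x)\geq \frac{1}{2\pi}\int_{0}^{2\pi}\left[ p_{U}(t,x+re^{i\theta },x)-p_{U}(t,x,x)\right] d\theta$, then divides by $r$, passes to the limit by bounded convergence, and uses $\int_{0}^{2\pi}e^{i\theta }d\theta =0$ to get $\frac{d}{dx}p_{U}(t,x,x)\geq 0$. You instead use only the second inequality $p_{U}(t,s+r,s)\leq p_{U}(t,s+r,s+r)$, differentiate the nonnegative difference $\phi$ at $r=0$, and invoke the symmetry $p_{U}(t,a,b)=p_{U}(t,b,a)$ to identify $\phi'(0)=\partial _{1}p_{U}(t,s,s)$ and $f'(s)=2\partial _{1}p_{U}(t,s,s)$; this is valid and leaner, requiring no angular averaging and no convergence lemma. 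It is worth noting what this means structurally: your route never touches the circle-average inequality --- i.e.\ the mirror-coupling estimate of Lemmas \ref{u and v proc of bdd var}--\ref{mirror moves left}, Theorem \ref{theorem 1} and Proposition \ref{extremum of transition density on circle}, which is the paper's main new content --- and rests entirely on the half of Theorem \ref{main theorem} whose proof the paper only sketches by reference to \cite{PascuPascu}. The paper's argument, by contrast, showcases the new inequality (while still also needing the second one), so the two proofs buy different things: yours, economy and the observation that the radial-derivative bound already follows from the weaker input; the paper's, a derivation that exhibits its coupling machinery.

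On the upgrade to strict monotonicity you are essentially at the paper's own level of rigor (the paper, too, just invokes real analyticity and asserts non-constancy), but the specific mechanism you propose is not an argument as stated: at a \emph{fixed} $t$ you cannot isolate ``the first non-trivial eigenfunctions'' in $\sum_{n}e^{-\mu _{n}t}\phi _{n}(s)^{2}$ --- that separation is only available in the $t\to \infty$ asymptotics, and a priori all modes could conspire to make the diagonal radially constant at the given time. So what both you and the paper still owe is a genuine exclusion of constancy of $r\mapsto p_{U}(t,r,r)$ on $(0,1)$ for the fixed $t$ (after which analyticity does finish the job); this can be supplied, e.g.\ by exploiting the explicit Bessel form of the angularly averaged eigenfunction expansion, or, as you suggest, by tracking strictness through the coupling, but it does require an actual argument rather than a glance at the lowest nontrivial mode.
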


\begin{proof} Let $x\in \left( 0,1\right) $ and $t>0$ be arbitrarily
fixed. From Theorem \ref{main
theorem} we obtain%
\begin{eqnarray*}
p_{U}\left( t,x+r,x+r\right) -p_{U}\left( t,x,x\right) &\geq &\frac{1}{2\pi }%
\int_{0}^{2\pi }p_{U}\left( t,x+re^{i\theta },x\right) d\theta
-p_{U}\left(
t,x,x\right) \\
&=&\frac{1}{2\pi }\int_{0}^{2\pi }p_{U}\left( t,x+re^{i\theta
},x\right) -p_{U}\left( t,x,x\right) d\theta ,
\end{eqnarray*}%
for any $r\in \left( 0,\min \left\{ x,1-x\right\} \right) $.

Dividing by $r$ and passing to the limit with $r\searrow 0$, we obtain:%
\begin{eqnarray*}
\frac{d}{dx}p_{U}\left( t,x,x\right) &=&\lim_{r\searrow
0}\frac{p_{U}\left(
t,x+r,x+r\right) -p_{U}\left( t,x,x\right) }{r} \\
&\geq &\frac{1}{2\pi }\lim_{r\searrow 0}\int_{0}^{2\pi
}\frac{p_{U}\left( t,x+re^{i\theta },x\right) -p_{U}\left(
t,x,x\right) }{r}d\theta .
\end{eqnarray*}

By bounded convergence theorem ($p_{U}\left( t,\cdot ,x\right) $ is
a $C^{2}$ function in the second variable, hence $\nabla p_{U}\left(
t,\cdot ,x\right)
$ is bounded in a neighborhood of $x$), we obtain%
\begin{equation*}
\frac{d}{dx}p_{U}\left( t,x,x\right) \geq \frac{1}{2\pi
}\int_{0}^{2\pi }\nabla p_{U}\left( t,x,x\right) \cdot e^{i\theta
}d\theta =0,
\end{equation*}%
where we denoted by $\nabla p_{U}$ the gradient of $\nabla
p_{U}\left( t,\cdot ,x\right) $ in the second variable.

Since $x\in \left( 0,1\right) $ was arbitrarily chosen, we have%
\begin{equation*}
\frac{d}{dx}p_{U}(t,x,x)\geq 0,\qquad x\in \left( 0,1\right) ,
\end{equation*}%
which shows that $p_{U}\left( t,x,x\right) $ is an increasing
function of $x\in \left( 0,1\right) $ for any $t>0$ arbitrarily
fixed.

Since $p_{U}\left( t,x,x\right) $ is the diagonal of a heat kernel
of an operator with real analytic coefficients implies that
$p_{U}\left( t,x,x\right) $ is a real analytic function, and
therefore it cannot be constant on a non-empty subset of $\left(
0,1\right) $. This, together with the fact that $p_{U}\left(
t,x,x\right) $ is increasing on $\left( 0,1\right) $ shows that
$p_{U}\left( t,x,x\right) $ is in fact strictly increasing for $x\in
\left( 0,1\right) $ for any $t>0$ arbitrarily fixed, concluding the
proof. \end{proof}

\section{Extensions and Applications\label{Extensions and Applications}}

The Laugesen-Morpurgo Conjecture \ref{Laugesen-Morpugo conjecture in
R^2} has a natural extension to $\mathbb{R}^{n}$ for any $n\in
\mathbb{N}^{\ast }$, as follows:

\begin{conjecture}[General Laugesen-Morpurgo conjecture]
For $n\in \mathbb{N}^{\ast }$ arbitrarily fixed, let
$p_{\mathbb{U}}(t,x,y)$ denote the heat kernel for the Laplacian
with Neumann boundary conditions on the unit ball
$\mathbb{U}=\left\{ x\in \mathbb{R}^{n}:\left\vert \left\vert
x\right\vert \right\vert <1\right\} $ in $\mathbb{R}^{n}$. For any
$t>0$ arbitrarily fixed, the radial function
$p_{\mathbb{U}}(t,x,x)$, called the
diagonal element of the heat kernel, is a strictly increasing function of $%
\left\vert |x|\right\vert $, that is,
\begin{equation}
p_{\mathbb{U}}(t,x,x)<p_{\mathbb{U}}(t,y,y), \label{general
laugesen-Morpurgo conjecture}
\end{equation}%
for any $t>0$ and $x,y\in \mathbb{U}$ with $\left\vert \left\vert
x\right\vert \right\vert <\left\vert \left\vert y\right\vert
\right\vert $.
\end{conjecture}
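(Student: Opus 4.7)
The plan is to follow the two-dimensional strategy verbatim, with the mirror now being an $(n-1)$-dimensional affine hyperplane. Given $x\in(0,1)$, $r\in(0,\min\{x,1-x\})$ and a unit vector $u\in S^{n-1}$, I build the mirror coupling $(X_{t},Y_{t})$ of reflecting Brownian motions in $\mathbb{U}\subset\mathbb{R}^{n}$ starting from $X_{0}=x+re_{1}$ and $Y_{0}=x+ru$, where $e_{1}$ is the radial unit vector. The Brownian motion driving $Y_{t}$ is the reflection of that driving $X_{t}$ across the perpendicular bisecting hyperplane $M_{t}$ of $[X_{t},Y_{t}]$, augmented as usual by the inward-normal local-time terms at $\partial\mathbb{U}$. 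Setting $m_{t}=X_{t}-Y_{t}$ and $p_{t}=X_{t}+Y_{t}$ in $\mathbb{R}^{n}$, the hyperplane has equation $m_{t}\cdot\bigl(z-p_{t}/2\bigr)=0$, and rotational symmetry of the ball lets me assume $e_{1}$ is the radial direction without loss of generality.

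The central step is the $n$-dimensional analogue of Lemma \ref{mirror moves left}: for $t<\tau\wedge\tau_{1}$, where $\tau_{1}=\inf\{t:0\in M_{t}\}$, the open half-space that initially contains $x$ only shrinks. To prove this, I introduce the $\mathbb{R}^{n}$-valued process
\[
U_{t}=\frac{2\,m_{t}}{m_{t}\cdot p_{t}},\qquad t<\tau\wedge\tau_{1},
\]
which is well defined because $m_{t}\cdot p_{t}=|X_{t}|^{2}-|Y_{t}|^{2}\neq 0$ on this time set; then $M_{t}=\{z:U_{t}\cdot z=1\}$. Applying It\^{o} to the map $(m,p)\mapsto 2m/(m\cdot p)$, using the componentwise SDEs for $m_{t}$ and $p_{t}$ (which generalize \eqref{process m}--\eqref{processes n, p, q} in the obvious way to $n$ indices), I expect all martingale terms to cancel exactly as in the two-dimensional case. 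Using $|X_{t}|^{2}=\tfrac{1}{4}|m_{t}+p_{t}|^{2}=1$ on the support of $dL^{X}_{t}$, the remaining drift simplifies to
\[
dU_{t}=\frac{2}{m_{t}\cdot p_{t}}\Bigl(U_{t}-\tfrac{m_{t}+p_{t}}{2}\Bigr)\,dL^{X}_{t},
\]
so $U_{t}$ is a process of bounded variation. For any $P\in\overline{\mathbb{U}}$, the scalar equation $d(P\cdot U_{t})=\tfrac{2}{m_{t}\cdot p_{t}}(P\cdot U_{t}-P\cdot X_{t})\,dL^{X}_{t}$, together with the bound $|P\cdot X_{t}|\le|P|\,|X_{t}|<1$, lets the contradiction argument of Lemma \ref{mirror moves left} run word for word.

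Given this geometric lemma, the rest is almost automatic. The $n$-dimensional versions of Theorem \ref{theorem 1} and Proposition \ref{extremum of transition density on circle} follow from the same coupling observation, namely $Y_{t}\in B(x,\varepsilon)$ forces $X_{t}\in B(x,\varepsilon)$, hence
\[
p_{\mathbb{U}}(t,x+ru,x)\le p_{\mathbb{U}}(t,x+re_{1},x)\quad\text{for every }u\in S^{n-1}.
\]
Averaging over $u\in S^{n-1}$ and combining with the second inequality in Theorem \ref{main theorem}, whose proof is dimension-independent (cf.\ \cite{PascuPascu}), yields the $\mathbb{R}^{n}$-analogue of \eqref{main inequality}. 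Subtracting $p_{\mathbb{U}}(t,xe_{1},xe_{1})$ from both sides, dividing by $r$ and letting $r\searrow 0$, bounded convergence together with $\int_{S^{n-1}}u\,d\sigma(u)=0$ gives $\tfrac{d}{dx}p_{\mathbb{U}}(t,xe_{1},xe_{1})\ge 0$, and real analyticity of the diagonal upgrades this to strict monotonicity as in the two-dimensional case.

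The main obstacle is not conceptual but notational: the It\^{o} expansion of $U_{t}$ in $\mathbb{R}^{n}$ involves the full tensor of cross-variations of the $n$ components of $m_{t}$ and $p_{t}$, and one must check by explicit computation that the martingale contributions cancel exactly as the two-dimensional identities \eqref{quadratic variations of m, n, p, q} predict by analogy. This is mechanical for any fixed $n$ but cumbersome to write cleanly in full generality, which is presumably why the authors plan to present the case $n=3$ first as a fully-coordinatized model before indicating the passage to arbitrary $n$.
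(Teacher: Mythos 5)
Your proposal follows essentially the same route as the paper: the paper's Section on extensions proves the general case by exactly this scheme, writing your vector process $U_{t}=2m_{t}/(m_{t}\cdot p_{t})$ componentwise (for $n=3$, as $u_{t},v_{t},w_{t}$ with $du_{t}=\tfrac{2}{\|X_{t}\|^{2}-\|Y_{t}\|^{2}}(u_{t}-X_{t}^{1})\,dL_{t}^{X}$), running the same contradiction argument as in Lemma \ref{mirror moves left}, then the analogues of Theorem \ref{theorem 1}, the spherical average, the limit $r\searrow 0$, and real analyticity for strictness. The only difference is presentational: the paper treats $n=1$ by a direct midpoint computation and writes out $n=3$ as the model case, exactly as you anticipate.
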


We can use the ideas of the previous section (the case $n=2$) to
prove the above conjecture for any $n\in \mathbb{N}^{\ast }$, as
follows.

In the case $n=1$, for $0<x<1$ and $0<r<\min \left\{ x,1-x\right\} $
arbitrarily fixed, the mirror coupling with starting points $x-r$
and $x+r$
given by (\ref{mirror coupling in U})-(\ref{definition of Z_t}) becomes%
\begin{equation}
\left\{
\begin{array}{l}
X_{t}=x-r+W_{t}+\int_{0}^{t}\nu \left( X_{s}\right) dL_{s}^{X} \\
Y_{t}=x+r-W_{t}+\int_{0}^{t}\nu \left( Y_{s}\right) dL_{s}^{Y}%
\end{array}%
\right.
\end{equation}%
where $W_{t}$ is a $1$-dimensional Brownian motion starting at
$W_{0}=0.$

Denoting by $M_{t}$ the midpoint between $X_{t}$ and $Y_{t}$, we have%
\begin{equation*}
M_{t}=\frac{X_{t}+Y_{t}}{2}=x+\frac{1}{2}\int_{0}^{t}\nu \left(
X_{s}\right) dL_{s}^{X}+\frac{1}{2}\int_{0}^{t}\nu \left(
Y_{s}\right) dL_{s}^{Y},
\end{equation*}%
and since $L_{s}^{X}$ is constant for $s<\tau _{1}=\inf \left\{
s>0:X_{s}=-1\right\} $, and for $s<\tau =\inf \left\{
s>0:X_{s}=Y_{s}\right\} $ the process $L_{s}^{Y}$ can increase only
if $\nu \left( Y_{s}\right) =\nu \left( 1\right) =-1$ (the process
$Y_{s}$ cannot hit the boundary point $-1$ unless it couples with
$X_{s}$ first), it follows that%
\begin{equation*}
M_{t\wedge \tau \wedge \tau _{1}}=x-\frac{1}{2}L_{t\wedge \tau
\wedge \tau _{1}}^{Y},\qquad t\geq 0,
\end{equation*}%
which shows that $M_{t}$ is decreasing on the interval $\left(
0,\tau \wedge \tau _{1}\right) $, and therefore the process $Y_{t}$
is closer to $x$ than $X_{t}$ for all $t\in \left( 0,\tau \wedge
\tau _{1}\right) $.

Since for $t\geq $ $\tau \wedge \tau _{1}$ either the processes $X_{t}$ and $%
Y_{t}$ move together (for $t\in [ \tau ,\infty )$), or they are
symmetric with respect to the origin (for $t\in [ \tau \wedge \tau
_{1},\tau ]$), it follows that for all $t> 0$ the process $Y_{t}$ is
always closer to the point $x$ than the process $X_{t}$, and
therefore we obtain%
\begin{equation*}
p_I\left( t,x-r,x\right) \leq p_I\left( t,x+r,x\right) ,
\end{equation*}%
for all $t>0,$ $x\in \left( 0,1\right) $ and $r\in \left( 0,\min
\left\{ x,1-x\right\} \right) $, where $p_I\left( t,x,y\right) $
denotes the transition density of reflecting Brownian motion on the
interval $I=\left( -1,1\right) $.

A similar proof shows that $p_I\left( t,x+r,x\right) \leq p_I\left(
t,x+r,x+r\right) $, and therefore we
obtain%
\begin{equation*}
p_I\left( t,x-r,x\right) \leq p_I\left( t,x+r,x\right) \leq
p_I\left( t,x+r,x+r\right) ,
\end{equation*}%
or equivalent%
\begin{equation*}
\frac{p_I\left( t,x-r,x\right) +p_I\left( t,x+r,x\right) }{2} \leq
p_I\left( t,x+r,x\right) \leq p_I\left( t,x+r,x+r\right) ,
\end{equation*}%
for all $t>0$, $x\in \left( 0,1\right) $ and $r\in \left( 0,\min
\left\{ x,1-x\right\} \right) $, which corresponds to the double
inequality (\ref{main inequality}) in the case $n=1$.

Following the proof of Theorem \ref{Laugesen-Morpurgo conjecture},
for $x\in \left( 0,1\right) $ arbitrarily
fixed we obtain%
\begin{eqnarray*}
\frac{\partial }{\partial x}p_I\left( t,x,x\right)
&=&\lim_{r\searrow 0}\frac{p_I\left( t,x+r,x+r\right)
-p_I\left( t,x,x\right) }{r} \\
&\geq &\frac{1}{2}\lim_{r\searrow 0}\frac{p_I\left(
t,x-r,x\right) -p_I\left( t,x,x\right) }{r} \\
&&+\frac{p_I\left( t,x+r,x\right) -p_I\left( t,x,x\right) }{r} \\
&=&\frac{1}{2}\left( -p_I^{\prime }\left( t,x,x\right)
+p_I^{\prime }\left( t,x,x\right) \right)  \\
&=&0,
\end{eqnarray*}
where we denoted by $p_I^{\prime }$ the derivative of the function
$p_I\left( t,\cdot ,x\right) $ in the second variable, concluding
the proof in the $1$-dimensional case.

\begin{remark}
The fact that the Laugesen-Morpurgo conjecture is true in the case
$n=1$ is known (see for example \cite{Banuelos}, Remark 5.4 for an
analytic proof, or \cite{PascuPascu1} for a different probabilistic
proof).We presented it here for a unitary treatment of the
Laugesen-Morpurgo conjecture, in order to show that the same
argument can be applied regardless of the dimension $n\in
\mathbb{N}^{\ast }$.
\end{remark}

The coupling arguments in the previous section can also be applied,
with the appropriate changes, to give a proof of the general
Laugesen-Morpurgo Conjecture \ref{general laugesen-Morpurgo
conjecture} in the case $n\geq 3$. For example, in the case $n=3$,
replacing in (\ref{mirror coupling in U})-(\ref{definition of Z_t})
$W_{t}$ by a $3$-dimensional Brownian motion starting at $W_{0}=0$
and $\nu $ by the the corresponding unit normal vector field on the
boundary of the unit ball $\mathbb{U}=\left\{ x\in
\mathbb{R}^{3}:\left\vert \left\vert
x\right\vert \right\vert <1\right\} $, $\nu _{\mathbb{U}}\left( x\right) =-x$%
, where $x=\left( x^{1},x^{2},x^{3}\right) \in \partial \mathbb{U}$,
the
same formulae give the mirror coupling in $\mathbb{U}$ with starting points $%
X_{0}=x\in \mathbb{U}$ and $Y_{0}=y\in \mathbb{U}$.

Following Lemma \ref{mirror moves left}, for distinct starting points $%
X_{0}=\left( x^{1}+\rho ,0,0\right) ,Y_{0}=\left( x^{1},0,0\right)
+\rho u\in \mathbb{U}$ with $x^{1}\in \left( 0,1\right) $, $\rho \in
\left( 0,\min \left\{ 1-x^{1},x^{1}\right\} \right) $ and
$\left\vert \left\vert u\right\vert \right\vert =1$, we need to show
that for $t\leq \tau \wedge \tau _{1}$ ($\tau $ and $\tau _{1}$ are
the coupling time, respectively the hitting time of the origin by
$M_{t}$), the plane of symmetry $M_{t}$
between between $X_{t}$ and $Y_{t}$ given by%
\begin{equation*}
\left( X_{t}^{1}-Y_{t}^{1}\right) z^{1}+\left(
X_{t}^{2}-Y_{t}^{2}\right) z^{2}+\left( X_{t}^{3}-Y_{t}^{3}\right)
z^{3}-\frac{\left\vert \left\vert X_{t}\right\vert \right\vert
^{2}-\left\vert \left\vert Y_{t}\right\vert \right\vert ^{2}}{2}
=0,\quad \left( z^{1},z^{2},z^{3}\right) \in \mathbb{R}^{3},
\end{equation*}%
moves away from the point $M_{0}=x$, towards the origin (i.e. to the
\textquotedblleft left\textquotedblright ).

Assuming the contrary, there exists a point $P=\left( \alpha ,\beta
,\gamma \right) \in \mathbb{U}$ and times $0<t_{1}<t_{2}<\tau \wedge
\tau _{1}$ such that $P$ is to the right of $M_{t_{1}}$ and to the
left of $M_{t_{2}}$, that is we
have:%
\begin{equation*}
\left\{
\begin{array}{c}
\left( X_{t_{1}}^{1}-Y_{t_{1}}^{1}\right) \alpha +\left(
X_{t_{1}}^{2}-Y_{t_{1}}^{2}\right) \beta +\left(
X_{t_{1}}^{3}-Y_{t_{1}}^{3}\right) \gamma -\frac{1}{2}\left(
\left\vert \left\vert X_{t_{1}}\right\vert \right\vert
^{2}-\left\vert \left\vert
Y_{t_{1}}\right\vert \right\vert ^{2}\right) >0 \\
\left( X_{t_{2}}^{1}-Y_{t_{2}}^{1}\right) \alpha +\left(
X_{t_{2}}^{2}-Y_{t_{2}}^{2}\right) \beta +\left(
X_{t_{2}}^{3}-Y_{t_{2}}^{3}\right) \gamma -\frac{1}{2}\left(
\left\vert \left\vert X_{t_{2}}\right\vert \right\vert
^{2}-\left\vert \left\vert
Y_{t_{2}}\right\vert \right\vert ^{2}\right) <0%
\end{array}%
\right.
\end{equation*}%
or equivalent%
\begin{equation*}
\left\{
\begin{array}{c}
\alpha u_{t_{1}}+\beta v_{t_{1}}+\gamma w_{t_{1}}>1 \\
\alpha u_{t_{2}}+\beta v_{t_{2}}+\gamma w_{t_{2}.}<1%
\end{array}%
\right. ,
\end{equation*}%
where%
\begin{equation*}
u_{t}=\frac{2\left( X_{t}^{1}-Y_{t}^{1}\right) }{\left\vert
\left\vert X_{t}\right\vert \right\vert ^{2}-\left\vert \left\vert
Y_{t}\right\vert \right\vert ^{2}},\quad v_{t}=\frac{2\left(
X_{t}^{2}-Y_{t}^{2}\right) }{\left\vert \left\vert X_{t}\right\vert
\right\vert ^{2}-\left\vert \left\vert Y_{t}\right\vert \right\vert
^{2}},\quad w_{t}=\frac{2\left( X_{t}^{3}-Y_{t}^{3}\right)
}{\left\vert \left\vert X_{t}\right\vert \right\vert ^{2}-\left\vert
\left\vert Y_{t}\right\vert \right\vert ^{2}}.
\end{equation*}

As in Lemma \ref{u and v proc of bdd var}, $u_{t}$, $v_{t}$ and
$w_{t}$ are processes of bounded variation on $\left( 0,\tau \wedge
\tau _{1}\right) $,
explicitly given by%
\begin{equation*}
du_{t}=\frac{2}{\left\vert \left\vert X_{t_{1}}\right\vert
\right\vert ^{2}-\left\vert \left\vert Y_{t_{1}}\right\vert
\right\vert ^{2}}\left( u_{t}-X_{t}^{1}\right) dL_{t}^{X},
\end{equation*}%
where $L_{t}^{X}$ is the local time of $X_{t}$ on the boundary of
$\mathbb{U} $, and similarly for $v_{t}$ and $w_{t}$.

Setting $t_{0}=\inf \left\{ t>t_{1}:\alpha u_{t}+\beta v_{t}+\gamma
w_{t}<1\right\} \in \left( t_{1},t_{2}\right) $, we obtain%
\begin{eqnarray*}
\alpha u_{t_{0}}+\beta v_{t_{0}} + \gamma w_{t_0}&=&\alpha
u_{t_{1}}+\beta v_{t_{1}}+ \gamma w_{t_1}\\
&&\qquad +2\int_{t_{1}}^{t_{0}}\frac{\alpha u_{t}+\beta v_{t}+\gamma
w_{t}-\left( \alpha X_{t}^{1}+\beta X_{t}^{2}+\gamma X_{t}^{3}\right) }{%
\left\vert \left\vert X_{t_{1}}\right\vert \right\vert
^{2}-\left\vert
\left\vert Y_{t_{1}}\right\vert \right\vert ^{2}}dL_{t}^{X} \\
&\geq &\alpha u_{t_{1}}+\beta v_{t_{1}}+ \gamma w_{t_1} \\
&>&1,
\end{eqnarray*}%
since $\alpha u_{t}+\beta v_{t}+\gamma w_{t}\geq 1$ for $t\in
\lbrack t_{1},t_{0}]$ and $$\left\vert \alpha X_{t}^{1}+\beta
X_{t}^{2}+\gamma X_{t}^{3}\right\vert \leq \left\vert \left\vert
X_{t}\right\vert \right\vert ~\left\vert \left\vert \left( \alpha
,\beta ,\gamma \right) \right\vert \right\vert \leq \left\vert
\left\vert \left( \alpha ,\beta ,\gamma \right) \right\vert
\right\vert < 1$$ for all $t\geq 0$.

By the definition of $t_0$ and the continuity of the processes
$u_{t}$, $v_{t}$ and $w_{t}$ we must also have $\alpha
u_{t_{0}}+\beta v_{t_{0}}+\gamma w_{t_{0}}=1$, contradicting $\alpha
u_{t_{0}}+\beta v_{t_{0}} + \gamma w_{t_0}>1$, which proves the
claim.

Using the previous argument and proceeding as in Theorem
\ref{theorem 1}, we obtain%
\begin{equation}
E^{\left( x^{1},0,0\right) +\rho u}1_{B\left( \left(
x^{1},0,0\right) ,\varepsilon \right) }\left( Y_{t}\right) \leq
E^{\left( x^{1}+\rho ,0,0\right) }1_{B\left( \left( x^{1},0,0\right)
,\varepsilon \right) }\left( X_{t}\right) ,
\end{equation}%
for any $t> 0$, $x^{1}\in \left( 0,1\right) $, $\rho ,\varepsilon
\in \left( 0,\min \left\{ x^{1},1-x^{1}\right\} \right) $ and $u\in
\partial \mathbb{U}$.

Using the equivalent result in Lemma \ref{lema1} for the
$3$-dimensional
reflecting Brownian motion, we obtain%
\begin{equation*}
p_{\mathbb{U}}\left( t,\left( x^{1},0,0\right) +\rho u,\left(
x^{1},0,0\right) \right) \leq p_{\mathbb{U}}\left( t,\left(
x^{1}+\rho ,0,0\right) ,\left( x^{1},0,0\right) \right) ,
\end{equation*}%
which as in Theorem \ref{main theorem} shows that for any $t> 0$,
$x=(x^1,0,0)\in \mathbb{U}$, $x^{1}>0$ and $\rho \in \left( 0,\min
\left\{ x^{1},1-x^{1}\right\} \right) $ we have%
\begin{eqnarray*}
&&\frac{1}{4\pi}\int_{\partial \mathbb{U}}p_{\mathbb{U}}\left(
t,\left( x^{1},0,0\right) +\rho u,\left(
x^{1},0,0\right) \right) d\sigma \left( u\right)  \\
&\leq &p_{\mathbb{U}}\left( t,\left( x^{1}+\rho ,0,0\right) ,\left(
x^{1},0,0\right) \right)  \\
&\leq &p_{\mathbb{U}}\left( t,\left( x^{1}+\rho ,0,0\right) ,\left(
x^{1}+\rho ,0,0\right) \right) ,
\end{eqnarray*}
where $d\sigma \left( u\right) $ is the surface measure on $\partial \mathbb{%
U}$.

Proceeding as in the proof of Theorem \ref{Laugesen-Morpurgo
conjecture} and using the rotational invariance of reflecting
Brownian motion in $\mathbb{U}$, we obtain%
\begin{equation*}
p_{\mathbb{U}}\left( t,x,x\right) <p_{\mathbb{U}}\left( t,y,y\right)
,
\end{equation*}%
for any $t>0$ and any $x,y\in \mathbb{U}$ with $\left\vert
\left\vert x\right\vert \right\vert <\left\vert \left\vert
y\right\vert \right\vert $, concluding the proof of the conjecture
in the case $n=3$.

The above discussion can be summarized in the following resolution
of the general Laugesen-Morpurgo conjecture:

\begin{theorem}
\label{general Laugesen-Morpurgo conjecture}The diagonal element $p_{\mathbb{%
U}}(t,x,x)$ of the Neumann heat kernel of the unit ball $\mathbb{U}$ in $%
\mathbb{R}^{n}$ ($n\in \mathbb{N}^{\ast }$) is a radially increasing
function, that is we have%
\begin{equation*}
p_{\mathbb{U}}\left( t,x,x\right) <p_{\mathbb{U}}\left( t,y,y\right)
,
\end{equation*}%
for any $t>0$ and any $x,y\in \mathbb{U}$ with $\left\vert
\left\vert x\right\vert \right\vert <\left\vert \left\vert
y\right\vert \right\vert $.
\end{theorem}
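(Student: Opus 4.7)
The plan is to promote the blueprint for $n=3$ worked out in the preceding pages to arbitrary $n \in \mathbb{N}^*$, with essentially no conceptual change. I would follow the same four-step scheme: build the mirror coupling in $\mathbb{U} \subset \mathbb{R}^n$, establish that the reflecting hyperplane moves monotonically toward the origin, convert this into a pointwise extremal inequality for the transition density, and differentiate a spherical mean at $r = 0$, closing with a real-analyticity argument to upgrade to strict monotonicity.

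Concretely, fix $x = (x^1, 0, \ldots, 0) \in \mathbb{U}$ with $x^1 \in (0,1)$ and $\rho \in (0, \min\{x^1, 1 - x^1\})$. For any unit vector $u \in \mathbb{R}^n$, I would define the $n$-dimensional mirror coupling $(X_t, Y_t)$ by the natural analogues of (\ref{mirror coupling in U})--(\ref{definition of Z_t}) with $X_0 = x + \rho e_1$, $Y_0 = x + \rho u$, inward normal $\nu_{\mathbb{U}}(z) = -z$, and an $n$-dimensional driving Brownian motion $W_t$. The hyperplane of symmetry for $t < \tau$ has equation
\begin{equation*}
\sum_{i=1}^n (X_t^i - Y_t^i)\, z^i - \tfrac{1}{2}\bigl(\|X_t\|^2 - \|Y_t\|^2\bigr) = 0,
\end{equation*}
and the auxiliary processes $u_t^i = 2(X_t^i - Y_t^i)/(\|X_t\|^2 - \|Y_t\|^2)$ are well defined on $[0, \tau \wedge \tau_1)$, where $\tau_1 = \inf\{t > 0 : 0 \in M_t\}$.

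The key technical step, paralleling Lemma \ref{u and v proc of bdd var}, is to apply It\^o's formula to each $u_t^i$ and verify that its local martingale part vanishes identically, yielding
\begin{equation*}
du_t^i = \frac{2}{\|X_t\|^2 - \|Y_t\|^2}\bigl(u_t^i - X_t^i\bigr)\, dL_t^X
\end{equation*}
after using $\|X_t\| = 1$ on the support of $dL_t^X$. With this bounded-variation representation in hand, the contradiction argument from the $n=3$ case extends verbatim: if the hyperplane swept across some fixed $P = (\alpha_1, \ldots, \alpha_n) \in \mathbb{U}$ at a first crossing time $t_0 \in (0, \tau \wedge \tau_1)$, continuity would force $\sum_i \alpha_i u_{t_0}^i = 1$, while the bound $|\sum_i \alpha_i X_t^i| \le \|X_t\| \, \|(\alpha_1, \ldots, \alpha_n)\| < 1$ combined with the Stieltjes representation above would give $\sum_i \alpha_i u_{t_0}^i > 1$, a contradiction.

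From hyperplane monotonicity the inequality $E^{x + \rho u} 1_{B(x,\varepsilon)}(Y_t) \le E^{x + \rho e_1} 1_{B(x,\varepsilon)}(X_t)$ follows as in Theorem \ref{theorem 1}, and the $n$-dimensional version of Lemma \ref{lema1} then gives $p_{\mathbb{U}}(t, x + \rho u, x) \le p_{\mathbb{U}}(t, x + \rho e_1, x)$. Integrating over $u \in \partial B(0,1)$ against normalized surface measure, combining with the companion inequality $p_{\mathbb{U}}(t, x + \rho e_1, x) \le p_{\mathbb{U}}(t, x + \rho e_1, x + \rho e_1)$, dividing by $\rho$ and letting $\rho \searrow 0$, the identity $\int_{S^{n-1}} u\, d\sigma(u) = 0$ yields $\tfrac{d}{dr} p_{\mathbb{U}}(t, r e_1, r e_1) \ge 0$; rotational invariance of $\mathbb{U}$ upgrades this to radial monotonicity in every direction, and real analyticity of $x \mapsto p_{\mathbb{U}}(t, x, x)$ promotes it to strict monotonicity. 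The only genuine obstacle is the It\^o and quadratic-variation bookkeeping required to show that the martingale part of each $du_t^i$ vanishes in dimension $n$; once that is checked, every remaining step is a dimension-free adaptation of the planar argument already presented.
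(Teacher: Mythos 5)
Your proposal follows essentially the same route as the paper: the authors likewise construct the mirror coupling in $\mathbb{U}$, show via the bounded-variation representation of the processes $u_t^i$ (the analogue of Lemma \ref{u and v proc of bdd var}) and the same first-crossing contradiction that the hyperplane of symmetry moves toward the origin, then pass through the occupation-density representation, the spherical average, the limit $\rho \searrow 0$, rotational invariance, and real analyticity to obtain strict radial monotonicity. The only difference is presentational: the paper carries out the argument explicitly for $n=1$ and $n=3$ and observes that it contains all the ideas of the general case, whereas you write it directly for arbitrary $n$, which is a faithful and correct extension.
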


The problem of monotonicity of Neumann heat kernel is closely
related to the celebrated ``Hot Spots'' conjecture of Jeffrey Rauch,
which asserts that the extrema of second Neumann eigenfunctions of a
planar convex domain are attained only on the boundary of the
domain. The Hot Spots Conjecture conjecture has been verified for
several types of convex domains in the plane (see for example
\cite{Pascu2} and the references cited therein).

Using and eigenfunction expansion of the transition density of
reflecting Brownian motion, it can be seen that the monotonicity in
the Laugesen--Morpurgo conjecture implies a similar monotonicity of
the second Neumann eigenfunction(s). As an application of Theorem
\ref{Laugesen-Morpurgo conjecture}, we can derive the Hot Spots
conjecture in the case of the unit disk (see for example
\cite{Pascu2} for an alternate proof), as follows:

\begin{theorem}
\label{Hot Spots for the unit disk}If $\varphi $ is a second Neumann
eigenfunction of the Laplaceian in the unit disk, then $\varphi
\left( x\right) $ is a radially monotonic function. In particular,
$\varphi $
attains its maximum and minimum only on the boundary of $U$, that is%
\begin{equation*}
\min_{\partial U}\varphi <\varphi \left( x\right) <\max_{\partial
U}\varphi ,\qquad x\in U,
\end{equation*}%
which shows that the Hot Spots conjecture holds for the unit disk
$U$.
\end{theorem}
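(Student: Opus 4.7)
The plan is to deduce the statement from the Laugesen--Morpurgo monotonicity (Theorem~\ref{Laugesen-Morpurgo conjecture}) by means of the spectral expansion of the Neumann heat kernel. First I would recall the classical Neumann spectrum of the unit disk: the eigenvalues are $\lambda_{m,n} = \mu_{m,n}^2$, where $\mu_{m,n}$ is the $n$-th nonnegative zero of $J_m'$. The principal eigenvalue is $\lambda_{0,0}=0$ with normalized eigenfunction $\varphi_0 \equiv 1/\sqrt\pi$; the second eigenvalue $\mu^2 := \mu_{1,1}^2$ has multiplicity two, admitting the orthonormal basis
\begin{equation*}
\varphi_1(r,\theta) = c\, J_1(\mu r)\cos\theta, \qquad \varphi_2(r,\theta) = c\, J_1(\mu r)\sin\theta,
\end{equation*}
for an explicit $c>0$; and every further eigenvalue strictly exceeds $\mu^2$.

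With this in hand, I would write
\begin{equation*}
p_U(t,x,x) - \tfrac{1}{\pi} = e^{-\mu^2 t}\bigl(\varphi_1(x)^2 + \varphi_2(x)^2\bigr) + \sum_{n \geq 3} e^{-\lambda_n t}\varphi_n(x)^2,
\end{equation*}
multiply both sides by $e^{\mu^2 t}$, and send $t\to\infty$. A standard tail bound (Weyl growth $\lambda_n \gtrsim n$ together with sup-norm estimates on the $\varphi_n$) makes the sum over $n\geq 3$ vanish uniformly on compact subsets of $U$, leaving the limit $\varphi_1(x)^2+\varphi_2(x)^2 = c^2 J_1(\mu|x|)^2$, where the Pythagorean identity $\cos^2\theta+\sin^2\theta=1$ collapses the angular pieces. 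By Theorem~\ref{Laugesen-Morpurgo conjecture}, for each $t>0$ the left-hand side is strictly increasing in $|x|$, and this monotonicity survives the limit (at worst weakening to non-decreasing). Hence $J_1(\mu r)^2$ is non-decreasing on $[0,1]$; since $\mu$ lies strictly before the first positive zero of $J_1$, the function $J_1(\mu\,\cdot\,)$ keeps constant sign on $[0,1]$, and real-analyticity upgrades monotonicity to strict monotonicity of $r\mapsto J_1(\mu r)$.

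Finally, any second Neumann eigenfunction can be written as $\varphi = a\varphi_1+b\varphi_2$, and after rotating coordinates I may assume $\varphi(r,\theta) = C J_1(\mu r)\cos\theta$ for some $C\neq 0$. Along each ray $\theta=\theta_0$ with $\cos\theta_0\neq 0$, $\varphi$ is the strictly monotonic function $r\mapsto C\cos\theta_0\, J_1(\mu r)$, which yields radial monotonicity and forces the extrema of $\varphi$ over $\overline{U}$ to be attained at the boundary points $(\theta_0,r)=(0,1)$ and $(\pi,1)$, proving the Hot Spots conclusion. The main obstacle in executing this plan is the $t\to\infty$ limit interchange: one must control the tail of the eigenfunction expansion tightly enough to transfer the \emph{strict} monotonicity of $p_U(t,x,x)$ at finite $t$ to the asymptotic factor $J_1(\mu|x|)^2$; once that is done rigorously the rest of the argument is structural.
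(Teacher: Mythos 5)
Your proposal is correct and follows essentially the same route as the paper: extract the second-eigenspace contribution $J^2(\mu r)(\cos^2\theta+\sin^2\theta)$ from the large-$t$ eigenfunction expansion of $p_U(t,x,x)$, invoke Theorem~\ref{Laugesen-Morpurgo conjecture} to get monotonicity of the radial Bessel factor, and conclude radial monotonicity (hence boundary extrema) for every second Neumann eigenfunction. You actually treat the $t\to\infty$ tail estimate and the strictness/sign issues more carefully than the paper does, and you use the standard normalization ($J_1$ and constant term $1/\pi$, versus the paper's $J_2$ and $1/\sqrt{\pi}$), but the argument is the same in substance.
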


\begin{proof} Using an eigenfunction expansion of $p_{U}\left(
t,x,y\right) $, it can be
seen that for large $t$ we have%
\begin{equation*}
p_{U}\left( t,x,x\right) \approx \frac{1}{\sqrt{\pi }}+e^{-\lambda
_{2}t}\varphi _{2}^{2}\left( x\right) +e^{-\lambda _{2}t}\psi
_{2}^{2}\left( y\right) ,
\end{equation*}%
where $\varphi _{2}\left( re^{i\theta }\right) =J_{2}\left(
\sqrt{\lambda _{2}}r\right) \cos \theta $ and $\psi _{2}\left(
re^{i\theta }\right) =J_{2}\left( \sqrt{\lambda _{2}}r\right) \sin
\theta $ are two independent second Neumann eigenfunctions for the
Laplaceian on $U$, $\lambda _{2}$ is the second Neumann eigenvalue
and $J_{2}$ is the Bessel function of order $2$ (see for example
\cite{Bandle}, pp. 92 -- 93).

From Theorem \ref{Laugesen-Morpurgo conjecture} it follows that%
\begin{equation*}
\varphi _{2}^{2}\left( r\right) +\psi _{2}^{2}\left( r\right)
=J_{2}^{2}\left( \sqrt{\lambda _{2}}r\right)
\end{equation*}%
is an increasing function of $r$, and therefore for an arbitrary
second
Neumann eigenfunction $\varphi $ (the second Neumann eigenspace is $2$%
-dimensional) we have%
\begin{eqnarray*}
\varphi \left( re^{i\theta }\right)  &=&\alpha \varphi _{2}\left(
re^{i\theta }\right) +\beta \psi _{2}(re^{i\theta }) \\
&=&J_{2}\left( \sqrt{\lambda _{2}}r\right) \left( \alpha \cos \theta
+\beta \sin \theta \right)
\end{eqnarray*}%
is a monotonic function of $r$ for $\theta \in \lbrack 0,2\pi )$
arbitrarily fixed, and the claim follows. \end{proof}

\begin{remark}
\label{Hot Spots remark}The fact that the Hot Spots conjecture holds
in the case of the unit disk is a known result (see for example
\cite{Kawohl}, \cite{Pascu2}). The above proof is meant to show the
connection of the Laugesen-Morpurgo conjecture with the Hot Spots
conjecture, connection which may be used for a possibly different
approach in the resolution of the later conjecture.

More precisely, if the Laugesen-Morpurgo conjecture can be extended
to a certain smooth convex domain $D$, that is, if it can be shown
that the diagonal element $p_{D}\left( t,x,x\right) $ of the
transition density of the reflecting Brownian motion in $D$ is an
increasing function of $x$ along a certain family of curves covering
$D$, then, at least in the case of the $1$-dimensional second
Neumann eigenspace (it is known that the second Neumann eigenspace
is either $1$ or $2$-dimensional), as in the above proof it follows
that the second Neumann eigenfunctions are monotone along the same
family of curves, thus proving that the Hot Spots conjecture holds
for the domain $D$.
\end{remark}

\bibliographystyle{amsalpha}

\end{document}